\newcommand{\UU}{\ensuremath{\mathbb{U}}}
\newcommand{\VV}{\ensuremath{\mathbb{V}}}
\newcommand{\cB}{\ensuremath{\mathcal{B}}}
\newcommand{\cL}{\ensuremath{\mathcal{L}}}
\newcommand{\AutUL}{\Gamma}
\newcommand{\tg}[1]{\AutUL_{[#1]}}
\newcommand{\set}[2]{\left\{{#1}\left|\vphantom{#1#2\strut}\right.\, 
                    {#2\strut}\right\}}
\newcommand{\smallset}[2]{\{{#1}\left|\vphantom{}\right.\,{#2}\}}
\newcommand{\genset}[2]{\langle{#1}\,\vert\,{#2}\rangle}
\newcommand{\Aut}[2][]{\operatorname{Aut}_{#1}(#2)}
\newcommand{\id}{\ensuremath{\mathrm{id}}}
\newcommand{\Alt}[1]{\operatorname{A}_{#1}}
\newcommand{\Sym}[1]{\operatorname{S}_{#1}}
\newcommand{\SL}[2]{\operatorname{SL}(#1,#2)}
\newcommand{\PSL}[2]{\operatorname{PSL}(#1,#2)}
\newcommand{\GL}[2]{\operatorname{GL}(#1,#2)}
\newcommand{\PgL}[2]{\operatorname{P\Gamma L}(#1,#2)}
\newcommand{\AGL}[2]{\operatorname{AGL}(#1,#2)}
\newcommand{\ASL}[2]{\operatorname{ASL}(#1,#2)}
\newcommand{\U}[2]{\operatorname{U}(#1,#2)}
\newcommand{\SU}[2]{\operatorname{SU}(#1,#2)}
\newcommand{\PSU}[2]{\operatorname{PSU}(#1,#2)}
\newcommand{\Sp}[2]{\def\test{#1}\ifx\empty\test
      \mathrm{Sp}\left(#2\right)\else\mathrm{Sp}(#1,#2)\fi}
\newcommand{\Sz}[1]{\mathrm{Sz}(#1)}
\newcommand{\Ree}[1]{\mathrm{Ree}(#1)}
\newcommand{\Cg}[1]{\operatorname{C}\nolimits_{#1}}
\newcommand{\Q}[1]{\operatorname{Q}\nolimits_{#1}}
\newcommand{\FF}{\mathbb{F}}
\newcommand{\gp}{G}
\newcommand{\lpg}{G^\dagger}
\newcommand{\widebar}[1]{\mathop {\mathchoice
    {\vbox
    {\m@th \ialign {##\crcr \noalign {\kern 1\p@ }\hrulefill \crcr
        \noalign {\kern 1\p@ \nointerlineskip }%
        $\hfil \textstyle {#1}\hfil $\crcr }}}
    {\vbox
    {\m@th \ialign {##\crcr \noalign {\kern 1\p@ }\hrulefill \crcr
        \noalign {\kern 1\p@ \nointerlineskip }%
        $\hfil \textstyle {#1}\hfil $\crcr }}}
    {\vbox
    {\m@th \ialign {##\crcr \noalign {\kern 1\p@ }\hrulefill \crcr
        \noalign {\kern 1\p@ \nointerlineskip }%
        $\hfil \scriptstyle {#1}\hfil $\crcr }}}
    {\vbox
    {\m@th \ialign {##\crcr \noalign {\kern 1\p@ }\hrulefill \crcr
        \noalign {\kern 1\p@ \nointerlineskip }%
        $\hfil \scriptscriptstyle {#1}\hfil $\crcr }}}%
    }}
\let\gal\widebar
\newtheorem*{main}{Main Theorem}
\theoremstyle{plain}
\newtheorem{theo}{Theorem}[section]
\newtheorem{coro}[theo]{Corollary}
\newtheorem{lemm}[theo]{Lemma}
\newtheorem{prop}[theo]{Proposition}
\theoremstyle{definition}
\newtheorem{rema}[theo]{Remark}
\let\setminus\smallsetminus
\let\epsilon\varepsilon
\let\phi\varphi
\title{Moufang sets generated by translations\\ in unitals}%
\author{Theo Grundh\"ofer, Markus J. Stroppel, Hendrik Van Maldeghem}%
\let\MYauthor\shortauthor
\let\MYtitle\shorttitle
\newcommand{\keywords}[1]{\par\noindent{\bfseries Keywords: }#1}
\newcommand{\classification}[1]{\par%
  \noindent{\bfseries Mathematics Subject Classification: }#1}
\begin{document}
\maketitle

\begin{abstract}
  \noindent%
  We consider unitals of order~$q$ with two points which are centers
  of translation groups of order~$q$. The group $G$ generated by these
  translations induces a Moufang set on the block joining the two
  points. We show that $G$ is either
  $\operatorname{SL}(2,\mathbb{F}_q)$ (as in all classical unitals and
  also in some non-classical examples), or
  $\operatorname{PSL}(2,\mathbb{F}_q)$, or a Suzuki or a Ree group.
  Moreover, $G$ is semi-regular outside the special
  block.  %

  \medskip
  \classification{%
    05E20 
    05B30 
    51A10 
  }%
  \keywords{Design, unital, automorphism, translation, Moufang set,
    two-transitive group, unitary group, Suzuki group, Ree group}
\end{abstract}

\noindent%
In~\cite{MR3090721} we considered unitals admitting all possible
translations (see Section~\ref{sec:unitals} below for definitions) and
characterized the classical (hermitian) unitals by this property. %
The present paper takes a more general view: we only assume
translations with centers on a single block, and prove the following.

\begin{main}
  Let\/ $\UU$ be a unital of order~$q$ with two points which are
  centers of translation groups of order~$q$.  Then the group~$G$
  generated by these two translation groups is isomorphic to one of
  the following.
  \begin{enumerate}
  \item $\SL 2 {\FF_q} $ or $\PSL 2 {\FF_q}$, where $q$ is a prime
    power.
  \item 
    The Suzuki group $\Sz{q}$, where $q = 2^{2s} \ge 2^6$ for some odd
    integer $s\ge 3$.
  \item 
    The Ree group $\Ree{q}$, where $q = 3^{3r} \ge 3^3$ for some
    odd integer $r\ge 1$.
  \end{enumerate}
  Moreover, the group~$G$ acts semi-regularly on the set of points
  outside the block containing the translation centers.
\end{main}

In the classical (i.e.\ hermitian) unital of order $q$, the group $G$
as above is isomorphic to $\SL 2 {\FF_q}$, %
cp.~\cite[3.1, 4.1]{MR3533345}.  It seems that no unital of odd order
$q$ is known where $G \cong \PSL2{\FF_q}$; %
for $q=3$ there is no such unital by Proposition \ref{PSL23} below
(this uses \cite[2.3]{MR3090721}).  There exists a non-classical
unital of order $q=4$ such that $G \cong \SL 2 {\FF_4} \cong \Alt 5$,
see \cite[4.1]{MR3533345}.  More examples (of order $8$) have been
found by M\"ohler in her Ph.D. thesis~\cite[Section\,6]{Moehler2020}.

The unitals of order~$q$ with two points which are centers of
translation groups of order~$q$ are studied also by Rizzo in his
Ph.D. thesis~\cite{RizzoPhD}. The last chapter of this thesis contains
results about embeddings of such unitals into projective planes of
order~$q^2$.
For generalized quadrangles, a situation analogous to the one
considered in the present paper is treated in a series of papers,
see~\cite{MR1895347}, \cite{MR1957745}, \cite{MR1895346},
and~\cite{MR2245777}.

Our Main Theorem resembles results for projective planes (instead of unitals)
obtained by Hering \cite{MR0467511}, \cite{MR533094}, who considered groups
generated by elations. The following statement is a very special case
of \cite[Theorem 3.1]{MR533094}: if a projective plane of finite order $q$
contains a triangle $p, z_1, z_2$ such that the group of all elations
with center $z_j$ and axis $pz_j$ has order $q$ for $j=1,2$, then the plane is
desarguesian and the group generated by these two elation groups
is isomorphic to $\SL 2 {\FF_q}$.

\section{Unitals, translations and Moufang sets}
\label{sec:unitals}

A unital $\UU = (U,\cB)$ of order~$q>1$ is a
$2$-$(q^3+1,q+1,1)$-design.  In other words, $\UU$ is an incidence
structure such that any two points in~$U$ are joined by a unique block
in~$\cB$, there are $|U|=q^3+1$ points, and every block has exactly
$q+1$ points. It follows that every point is on exactly $q^2$ blocks.

\begin{lemm}\label{LemFix}
  Let\/ $\VV$ be a unital of order~$q$, and let\/ $\varphi\in\Aut\VV$
  be an automorphism of\/~$\VV$. If\/~$\varphi$ fixes more than
  $q^2+q-2$ points then~$\varphi$ is trivial. %
  In particular, if~$\varphi$ fixes a point~$x$ and a block~$B$ not
  through~$x$ and all points on blocks joining~$x$ to a point on~$B$
  then~$\varphi$ is trivial. %
\end{lemm}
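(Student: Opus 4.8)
The plan is to establish the first assertion by a counting argument over the blocks through a single moved point, and then to obtain the ``in particular'' clause as an immediate numerical consequence. Throughout write $f$ for the number of points fixed by $\varphi$ and let $F$ denote the fixed-point set. For the first part suppose $\varphi$ moves at least one point $z$, so $\varphi(z)\neq z$. Since $\VV$ is a $2$-design with $\lambda=1$, each fixed point is joined to $z$ by a unique block, so the fixed points distribute among the $q^2$ blocks through $z$ and $f=\sum_{B\ni z}\lvert B\cap F\rvert$. The strategy is then to bound $\lvert B\cap F\rvert$ block by block.

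The key step, and the one I expect to be the crux, is the observation that \emph{at most one} block through $z$ is fixed as a block by $\varphi$. Indeed, if $B\ni z$ satisfies $\varphi(B)=B$, then $\varphi(z)\in\varphi(B)=B$; as $z$ and $\varphi(z)$ are distinct, the uniqueness of the joining block forces $B$ to be the block through $z$ and $\varphi(z)$. For the remaining estimates I would use two elementary facts: a block carrying two fixed points is fixed by $\varphi$ (its image is the block through those two points), so any non-fixed block through $z$ contains at most one fixed point; and the at most one fixed block through $z$ contains at most $q-1$ fixed points, since both $z$ and $\varphi(z)$ lie on it and neither is fixed. Summing over the $q^2$ blocks through $z$ then yields $f\le (q-1)+(q^2-1)\cdot 1=q^2+q-2$. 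Contrapositively, if $\varphi$ fixes more than $q^2+q-2$ points it can move no point; an automorphism fixing every point fixes every block as well (blocks are determined by their point sets), so $\varphi$ is trivial.

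For the second statement I would simply count the fixed points forced by the hypothesis. The $q+1$ blocks joining $x$ to the $q+1$ points of $B$ are pairwise distinct and meet only in $x$ (if two of them coincided, that block would pass through two points of $B$ and hence equal $B$, contradicting $x\notin B$), so their union consists of $x$ together with $q$ further points on each block, i.e.\ $1+(q+1)q=q^2+q+1$ points, all fixed by assumption. Since $q^2+q+1>q^2+q-2$, the first part applies and $\varphi$ is trivial. The only delicate point in the whole argument is the ``at most one fixed block through a moved point'' claim; everything else is routine bookkeeping with the design parameters already recorded.
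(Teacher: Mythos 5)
Your proof is correct and follows essentially the same route as the paper's: fix a moved point, note that at most one block through it can be $\varphi$-invariant (and that block carries at most $q-1$ fixed points, since it contains the moved point and its image), that every non-invariant block through it carries at most one fixed point, and sum over the $q^2$ blocks to get $f\le q^2+q-2$; the second assertion then follows by counting the $q^2+q+1$ fixed points forced by the hypothesis. The only difference is cosmetic: you spell out several one-line justifications (uniqueness of the invariant block, why two fixed points force a fixed block, why fixing all points implies triviality) that the paper leaves implicit.
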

\begin{proof}
  Let $y$ be a point that is moved by~$\varphi$. Joining~$y$ with each
  one of the fixed points yields a set of blocks through~$y$. At most
  one of those blocks can be a fixed block of~$\varphi$, and a non-fixed
  block contains at most one fixed point. %
  If a fixed block through~$y$ exists then that block contains at most
  $q-1$ fixed points. For the number~$f$ of fixed points we obtain
  $q^2-1 \ge f - (q-1)$ and $f \le q^2+q-2$. If no block through~$y$ is
  fixed then $f \le q^2 \le q^2+q-2$.

  The second assertion follows from the fact that the point set in
  question contains $(q+1)q+1 = q^2+q+1$ points. 
\end{proof}

An automorphism of $\UU$ is called a \emph{translation} of
$\UU$ with center~$z$ if it fixes each block through the
point~$z$. The set of all translations with center~$z$ is denoted
by~$\tg{z}$. %

A \emph{Moufang set} is a set~$X$ together with a collection of groups
$(R_x)_{x\in X}$ of permutations of~$X$ such that each~$R_x$ fixes~$x$
and acts regularly (i.\,e., sharply transitively) on
$X \setminus \{x\}$, and such that the collection
$\smallset{R_y}{y \in X}$ is invariant under conjugation by the
\emph{little projective group} $\genset{R_x}{x \in X}$ of
the Moufang set. The groups~$R_x$ are called \emph{root groups}.

The finite Moufang sets are known explicitly:

\begin{theo}\label{finiteMoufang}
  The little projective group of a finite Moufang set is either
  sharply two-transitive, or it is permutation isomorphic to one of
  the following two-transitive permutation groups of degree $q+1$:
  $\PSL 2{\FF_q}$ with a prime power $q>3$, $\PSU3{\FF_{f^2}|\FF_f}$ with
  a prime power $q=f^3 \ge 3^3$, a Suzuki group
  $\Sz{2^s}= {}^2B_2(2^s)$ with $q=2^{2s} \ge 2^6$, or a Ree group
  $\Ree{3^r} = {}^2G_2(3^r)$ with $q= 3^{3r}$, where $r$ and $s$ are
  positive odd integers.
\end{theo}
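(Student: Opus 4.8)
The plan is to re-express the Moufang set as an abstract permutation group and then to establish the classification of finite groups with a split $(B,N)$-pair of rank~$1$. Fix two points $0,\infty\in X$ and write $G=\genset{R_x}{x\in X}$ for the little projective group. The Moufang axioms make $G$ act two-transitively on $X$: it is transitive because the root groups move points, and $G_\infty\supseteq R_\infty$ is already transitive on $X\setminus\{\infty\}$. Invariance of $\smallset{R_y}{y\in X}$ under conjugation shows that $U:=R_\infty$ is normal in $B:=G_\infty$ and regular on $X\setminus\{\infty\}$; with $H:=G_{0,\infty}$ one obtains $B=U\rtimes H$. Writing $q=|U|=|X|-1$, this places us exactly in the setting of finite split $(B,N)$-pairs of rank~$1$, and the goal is to list all possibilities for $G$.

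First I would run Burnside's dichotomy for finite two-transitive groups: $G$ is either of affine type or almost simple. In the affine case $G$ has an elementary abelian regular normal subgroup $N$ of order $|X|$, and I would check that the little projective group then collapses to $N\rtimes U$: every root group already lies in $N\rtimes U$, while commutators of $U$ with its $N$-translates recover all of $N$, so $G=N\rtimes U$ acts sharply two-transitively. This is the first alternative (and, importantly, shows that even when the ambient two-transitive group is larger, such as $\mathrm{A\Gamma L}(1,p^n)$, the little projective group is still sharp). It then remains to treat the almost simple case, where one shows that $U$ is a $p$-group for the single prime $p\mid q$, indeed a Sylow $p$-subgroup of $G$, while $H$ is a $p'$-group; consequently the conjugates $\smallset{R_x}{x\in X}$ form a trivial-intersection family and $B$ is \emph{strongly embedded} in $G$.

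The core of the argument is to identify $G$ from this strong embedding. For $p=2$ I would invoke Bender's theorem on finite groups with a strongly embedded subgroup, which forces $G$ to be one of $\PSL2{\FF_q}$, the Suzuki group $\Sz{2^s}$ with $q=2^{2s}$, or the unitary group $\PSU3{\FF_{f^2}|\FF_f}$ with $q=f^3$, the precise type being read off from the isomorphism class of the root group $U$; the Suzuki family then occurs exactly for odd $s\ge 3$, as required for the twisted form ${}^2B_2$ to exist. For odd $p$ I would instead use the character-theoretic analysis of Zassenhaus-type groups (exceptional characters in the tradition of Suzuki, Feit and Brauer) together with the recognition theorems for rank-$1$ groups in odd characteristic, yielding $\PSL2{\FF_q}$, $\PSU3{\FF_{f^2}|\FF_f}$, and the Ree groups $\Ree{3^r}$; the last arise only when $q=3^{3r}$ with $r$ odd, as dictated by the order of the Tits endomorphism defining ${}^2G_2$. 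Assembling the arithmetic constraints on each family reproduces the stated degrees $q+1$ and the bounds $q\ge 2^6$ and $q\ge 3^3$.

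I expect the genuine obstacle to be the positive identification — rather than the mere exclusion — of the two twisted families $\Sz{2^s}$ and $\Ree{3^r}$: recognizing an abstract rank-$1$ group as a Suzuki or a Ree group from its parabolic data is delicate and historically demanded considerable effort (Suzuki's discovery of $\Sz{2^s}$, and the subsequent characterizations of the Ree groups). A heavier but more uniform alternative, legitimate here because $G$ is known to be finite and two-transitive, is to appeal directly to the classification of finite two-transitive groups (a consequence of the classification of finite simple groups) and simply select those entries whose point stabilizer contains a regular normal subgroup; this returns the same list while concealing the structural mechanism.
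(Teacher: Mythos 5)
Your proposal is correct and follows essentially the same route as the paper: the paper gives no proof of this theorem but cites exactly the classification of finite split BN-pairs of rank one due to Suzuki and Shult (for even $q$) and Hering, Kantor and Seitz (for odd $q$; see also Peterfalvi), and your sketch --- two-transitivity, $B=U\rtimes H$, the affine/almost-simple dichotomy, strong embedding plus Bender's theorem in characteristic two, exceptional-character and recognition arguments in odd characteristic --- is precisely an outline of what those references carry out. The only divergence is your fallback via the full classification of finite two-transitive groups: it is valid, but the paper explicitly stresses that its sources avoid the classification of finite simple groups, so that shortcut would sacrifice the stated provenance of the result.
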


This was proved (in the context of split BN-pairs of rank one) by
Suzuki~\cite{MR0162840} and Shult~\cite{MR0296150} for even~$q$, and
by Hering, Kantor and Seitz~\cite{MR0301085} for odd~$q$; these papers
rely on deep results on finite groups, but not on the classification
of all finite simple groups. See also Peterfalvi~\cite{MR1063141}. %
Note that $\PSL2{\FF_2} \cong\AGL1{\FF_3}$,
$\PSL2{\FF_3} \cong \Alt4 \cong \AGL1{\FF_4}$,
$\PSU3{\FF_4|\FF_2} \cong\ASL2{\FF_3}$ and $\Sz{2}\cong \AGL1{\FF_5}$ are
sharply two-transitive. The smallest Ree group $\Ree{3} \cong \PgL2{\FF_8}$
is almost simple, but not simple.

\bigbreak Let $\UU = (U,\cB)$ be a unital of order~$q$, and let
$\AutUL = \Aut\UU$ be its automorphism group. %
Throughout this paper, we assume that $\UU$ contains two
points~$\infty$ and~$o$ such that for $z\in\{\infty,o\}$ the
translation group~$\tg{z}$ has order~$q$.  Then~$\tg{z}$ acts
transitively on $B\setminus\{z\}$, for any block~$B$ through~$z$
(see~\cite[1.3]{MR3090721}). In particular, $\tg{x}$ has that
transitivity property for each point~$x$ on the block~$B_\infty$
joining~$\infty$ and~$o$. The group~$\gp$ generated by
$\tg{\infty}\cup\tg{o}$ contains the translation group~$\tg{x}$ for
each $x\in B_\infty$, and
$(B_\infty,(\tg{x}|_{B_\infty})_{x\in B_\infty})$ is a Moufang set,
with little projective group
$\lpg \coloneqq \gp|_{B_\infty} \cong \gp/\gp_{[B_\infty]}$, where
$\gp_{[B_\infty]}$ is the kernel of the action on~$B_\infty$.  This
kernel coincides with the center $Z$ of~$\gp$,
see~\cite[3.1.2]{MR3090721} or \cite[2.11]{MR0467511}. So~$\gp$ is a
central extension of the little projective group~$\lpg$.

\begin{coro}\label{applyRigidity}
  The kernel $\gp_{[B_\infty]} = Z$ acts semi-regularly on
  $U\setminus B_\infty$. %
\end{coro}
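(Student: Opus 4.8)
The plan is to show that the kernel $\gp_{[B_\infty]}$, which equals the center $Z$ of $\gp$, cannot fix any point outside $B_\infty$, and then invoke semi-regularity via Lemma~\ref{LemFix}. Since an element of the kernel already fixes every point of $B_\infty$ (it acts trivially there), it suffices to rule out that a non-trivial $\varphi \in \gp_{[B_\infty]}$ fixes a single point $y \notin B_\infty$; once no non-trivial kernel element has a fixed point outside $B_\infty$, the action of $Z$ on $U \setminus B_\infty$ is automatically semi-regular (free).

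First I would suppose, for contradiction, that some non-trivial $\varphi \in \gp_{[B_\infty]}$ fixes a point $y \notin B_\infty$. The idea is to combine the fixed points $\varphi$ already has on $B_\infty$ with this extra fixed point $y$ and push the total fixed-point count above the threshold $q^2+q-2$ of Lemma~\ref{LemFix}, forcing $\varphi = \id$. Concretely, $\varphi$ fixes all $q+1$ points of $B_\infty$. I would then consider the $q+1$ blocks joining $y$ to the points of $B_\infty$: each such block is fixed by $\varphi$ (it joins two fixed points $y$ and a point of $B_\infty$), so $\varphi$ permutes the remaining $q-1$ points on each of these blocks. The second, ``in particular'' clause of Lemma~\ref{LemFix} is tailored to exactly this configuration: a fixed point $y$, a block $B_\infty$ not through $y$, together with all points on the blocks joining $y$ to $B_\infty$.

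The main obstacle — and the crux of the argument — is to produce enough additional fixed points to apply the lemma, i.e.\ to show $\varphi$ fixes \emph{every} point on the blocks joining $y$ to $B_\infty$, not merely the blocks themselves. Here is where I would exploit that $\varphi$ lies in the center $Z$ of $\gp$ and that $\gp$ contains the transitive translation groups $\tg{x}$ for each $x \in B_\infty$. For a point $x \in B_\infty$, the block $B = xy$ meets $B_\infty$ only in $x$, and $\tg{x}$ acts transitively on $B \setminus \{x\}$; since $\varphi$ centralizes $\tg{x}$ and fixes $y \in B \setminus \{x\}$, for any $\tau \in \tg{x}$ we get $\varphi(\tau(y)) = \tau(\varphi(y)) = \tau(y)$, so $\varphi$ fixes the whole $\tg{x}$-orbit of $y$, which is all of $B \setminus \{x\}$. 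Thus $\varphi$ fixes every point of $B$, and letting $x$ range over $B_\infty$ shows $\varphi$ fixes all points on the $q+1$ blocks joining $y$ to $B_\infty$.

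With all these fixed points in hand, the hypothesis of the second part of Lemma~\ref{LemFix} is satisfied (take the point $y$ and the block $B_\infty$ not through $y$), so $\varphi$ is trivial, contradicting our assumption. Hence no non-trivial element of $\gp_{[B_\infty]} = Z$ fixes a point of $U \setminus B_\infty$, which is precisely the statement that $Z$ acts semi-regularly on $U \setminus B_\infty$. I expect the centralizer computation $\varphi\tau = \tau\varphi$ to be the decisive ingredient, as it converts a single off-block fixed point into a full block of fixed points and thereby reduces the corollary to a clean application of the fixed-point bound.
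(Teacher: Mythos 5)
Your proof is correct and follows essentially the same route as the paper: both exploit that $\varphi$ lies in the center of $\gp$ to transport the single fixed point $y$ along the (transitive) translation-group orbits, thereby fixing all points on the blocks joining $y$ to $B_\infty$, and then conclude via the second assertion of Lemma~\ref{LemFix}. The paper phrases this more compactly (the $G$-orbit $y^g$ is fixed pointwise, and this orbit covers the relevant blocks), while you make the underlying centralizer computation with $\tg{x}$ explicit; the content is identical.
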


\begin{proof}
  If $\phi \in \gp_{[B_\infty]} = Z$ fixes $x\in U\setminus B_\infty$,
  then it fixes also $x^g$ for every $g\in G$, hence
   all points on blocks joining~$x$ to a point on~$B_\infty$.
  Thus Lemma \ref{LemFix} implies that $\phi$ is trivial.
\end{proof}

The following fact was observed in the proof of \cite[Theorem 2.4]{MR533094}.

\begin{lemm}\label{simpleMoufangSetCommutators}
  Let\/ $(X,\smallset{\Delta_x}{x\in X})$ be a finite Moufang set.
  If the little
  projective group~$\Phi=\langle \Delta_x \mid x\in X\rangle$ is simple then
  $\Delta_x = [\Delta_x, \Phi_x]$ for every $x\in X$, where $\Phi_x$
  denotes the stabilizer of $x$ in $\Phi$. 
\end{lemm}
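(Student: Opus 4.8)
The plan is to prove the nontrivial inclusion $\Delta_x \subseteq [\Delta_x, \Phi_x]$; the reverse inclusion is automatic once one knows $\Delta_x \trianglelefteq \Phi_x$. That normality is immediate from the defining property of a Moufang set: the family $\{\Delta_y\}$ is permuted by $\Phi$ via $\Delta_y^{\,g} = \Delta_{y^g}$, so every element of $\Phi_x$ normalizes $\Delta_x$. In fact, since a nontrivial element of $\Delta_x$ fixes $x$ and no other point (regularity on $X\setminus\{x\}$), the set $x$ is the unique common fixed point of $\Delta_x$, whence $N_\Phi(\Delta_x) = \Phi_x$.

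First I would record two structural facts that make the root group amenable to a transfer argument. Distinct root groups meet trivially: if $g \notin \Phi_x$ then $x^g \neq x$, and a common nontrivial element of $\Delta_x \cap \Delta_{x^g}$ would fix both $x$ and $x^g$, which no nonidentity root element does. Hence $\Delta_x$ is a TI-subgroup with normalizer $\Phi_x$; moreover, by the classification of finite Moufang sets (Theorem~\ref{finiteMoufang}), in each of the simple cases $\Delta_x$ is a Sylow $p$-subgroup of $\Phi$, where $p$ is the defining characteristic.

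The heart of the argument is that the TI-property forces all fusion of $p$-elements to take place inside $\Phi_x$: if $1 \neq s \in \Delta_x$ and $s^g \in \Delta_x$ for some $g \in \Phi$, then $\langle s\rangle^g \le \Delta_x \cap \Delta_x^{\,g}$ is nontrivial, so $\Delta_x^{\,g} = \Delta_x$ and $g \in N_\Phi(\Delta_x) = \Phi_x$. Consequently the focal subgroup of $\Delta_x$ in $\Phi$, namely $\langle s^{-1}t \mid s, t \in \Delta_x \text{ are } \Phi\text{-conjugate}\rangle$, coincides with $\langle s^{-1} s^n \mid s \in \Delta_x,\ n \in \Phi_x\rangle = [\Delta_x, \Phi_x]$. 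Now $\Phi$, being nonabelian simple, is perfect, so $\Phi' = \Phi \supseteq \Delta_x$; the focal subgroup theorem then yields $[\Delta_x, \Phi_x] = \operatorname{foc}_\Phi(\Delta_x) = \Delta_x \cap \Phi' = \Delta_x$, which is the assertion.

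The step I expect to be the main obstacle is justifying that $\Delta_x$ is a Sylow $p$-subgroup (so that the focal subgroup theorem is available), together with the degenerate bookkeeping: one must exclude the possibility that $\Phi$ is abelian (a cyclic group of prime order is not the little projective group of a genuine Moufang set, so $\Phi$ is nonabelian simple and hence perfect), and one should note that the sharply two-transitive Moufang sets are not simple, so they do not occur here. If one prefers to avoid the classification, the same conclusion can be reached family by family, checking in each of $\PSL2{\FF_q}$, $\PSU3{\FF_{f^2}|\FF_f}$, $\Sz{q}$ and $\Ree{q}$ that the Hua torus acts on the root group with no nontrivial coinvariants; the uniform TI/transfer argument above is preferable, however, because it isolates exactly where simplicity enters.
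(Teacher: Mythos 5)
Your proof is correct, but it takes a genuinely different route from the paper's. The paper also starts from the classification of finite Moufang sets (Theorem~\ref{finiteMoufang}), but then verifies the inclusion $\Delta_x \le [\Delta_x,\Phi_x]$ family by family, citing the explicit commutator formulas that enter the Iwasawa-criterion simplicity proofs for $\PSL2{\FF_q}$, $\PSU3{\FF_{t^2}|\FF_t}$, $\Sz{2^s}$ and $\Ree{3^r}$ in the literature. You instead use the classification only for one structural fact --- that in each simple case $\Delta_x$ is a Sylow $p$-subgroup for the defining characteristic $p$ (a routine order computation in each of the four families, which you should make explicit, e.g.\ $|{\PSL2{\FF_q}}| = q(q^2-1)/\gcd(2,q-1)$ with $|\Delta_x|=q$, and similarly for the other three) --- and then run a uniform transfer argument: root groups are TI-subgroups with $N_\Phi(\Delta_x)=\Phi_x$, so all $\Phi$-fusion in $\Delta_x$ is realized inside $\Phi_x$, whence $\operatorname{foc}_\Phi(\Delta_x)=[\Delta_x,\Phi_x]$, and the focal subgroup theorem together with perfectness of the nonabelian simple group $\Phi$ gives $[\Delta_x,\Phi_x]=\Delta_x\cap\Phi'=\Delta_x$. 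Your approach buys uniformity (one argument instead of four literature citations) and it isolates exactly where simplicity enters (only through $\Phi'=\Phi$); what it does not recover is the sharper conclusion recorded in the Remark following the lemma in the paper, namely that for each $y\ne x$ there is a single element $\phi\in\Phi_{x,y}$ with $\Delta_x=\smallset{[\delta,\phi]}{\delta\in\Delta_x}$ --- that refinement genuinely needs the explicit commutator computations, since transfer only identifies the subgroup generated by all such commutators.
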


\begin{proof}
  By the classification of finite Moufang sets, see
  \ref{finiteMoufang}, the simple group~$\Phi$ is isomorphic to
  $\PSL2{\FF_q}$, $\PSU3{\FF_{t^2}|\FF_t}$, $\Sz{2^s}$,
  or~$\Ree{3^r}$, where $q>3$ with $q+1=|X|$, $t>2$ with $t^3+1=|X|$,
  $s>1$ with $2^{2s}+1=|X|$, or $r>1$ with $3^{3r}+1=|X|$,
  respectively. %
  We have $[\Delta_x, \Phi_x] \le \Delta_x$ since $\Phi_x$ normalizes
  $\Delta_x$; it remains to show that
  $\Delta_x \le [\Delta_x, \Phi_x]$.  This inclusion is an ingredient
  in simplicity proofs for $\Phi$ that use the Iwasawa criterion:
  
  For $\Phi = \PSL2{\FF_q}$ the necessary commutators are computed
  in the proof of \cite[Theorem 4.4, page~23]{MR1189139}. The case where
  $\Phi = \PSU3{\FF_{t^2}|\FF_t}$ is covered by \cite[Proof of II.10.13,
  page~244]{MR0224703}. For $\Phi = \Sz{2^s}$ the assertion follows from
  the commutator formula in \cite[page~205]{MR1189139}, and for
  $\Phi = \Ree{3^r}$ the three commutator
  formulas in \cite[\S~5, pages 36/37]{MR2730410} yield the assertion.
\end{proof}

\begin{rema}
  The references in the proof of~\ref{simpleMoufangSetCommutators}
  yield the following sharper conclusion: %
  for every $y\in X\setminus \{x\}$ 
  there exists an element $\phi \in \Phi_{x,y}$ such that $\Delta_x$
  is equal to the set $\left[\Delta_x,\phi\right] := %
  \set{ [\delta, \phi]}{ \delta \in \Delta_x}$ 
  of commutators. See also the proof of \cite[2.11 b)]{MR0467511}.
\end{rema}

\begin{prop}\label{PropSimple}
  If\/ $\lpg$ is simple then $\gp$ is a perfect central extension
  of\/~$\lpg$, i.e.\ $\gp$ coincides with its commutator group~$\gp'$,
  and\/~$\gp_ {[B_\infty]}$ is isomorphic to a quotient of the Schur
  multiplier of\/~$\lpg$.
\end{prop}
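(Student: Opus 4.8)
The plan is to push the commutator identity of Lemma~\ref{simpleMoufangSetCommutators} up from the quotient~$\lpg$ to~$\gp$, and then to invoke the theory of universal central extensions. First note that, being simple, $\lpg$ is one of the groups $\PSL2{\FF_q}$, $\PSU3{\FF_{t^2}|\FF_t}$, $\Sz{2^s}$ or $\Ree{3^r}$ of Theorem~\ref{finiteMoufang}; in particular $\lpg$ is nonabelian (a two-transitive group of degree $q+1\ge 3$ cannot be abelian, since an abelian transitive group is regular), hence perfect. Recall from the paragraph preceding Corollary~\ref{applyRigidity} that $\gp_{[B_\infty]}=Z$ is the centre of~$\gp$, so we have a central extension $1\to Z\to\gp\to\lpg\to 1$ of the perfect group~$\lpg$. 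Thus the two claims to establish are that $\gp=\gp'$ and that $Z$ is a quotient of the Schur multiplier $M(\lpg)$.

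For the perfectness, write $\rho\colon\gp\to\lpg$, $g\mapsto g|_{B_\infty}$, for the restriction homomorphism, so that the root group at $x\in B_\infty$ is $\Delta_x=\rho(\tg{x})$. I would first record two facts. Since $\tg{x}$ has order~$q$ and acts transitively on the $q$-element set $B_\infty\setminus\{x\}$, that action is regular; hence $\tg{x}\cap Z=1$ and $\rho$ restricts to an isomorphism $\tg{x}\to\Delta_x$. Secondly, as $\rho$ is surjective the stabiliser $\gp_\infty$ of the point~$\infty$ satisfies $\rho(\gp_\infty)=\lpg_\infty$, and every element of~$\gp_\infty$ fixes the centre~$\infty$ and so normalises~$\tg{\infty}$; therefore $[\tg{\infty},\gp_\infty]\le\tg{\infty}$. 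Applying $\rho$ and then Lemma~\ref{simpleMoufangSetCommutators} gives $\rho([\tg{\infty},\gp_\infty])=[\Delta_\infty,\lpg_\infty]=\Delta_\infty=\rho(\tg{\infty})$. As $\rho$ is injective on~$\tg{\infty}$ and $[\tg{\infty},\gp_\infty]\le\tg{\infty}$, this forces $[\tg{\infty},\gp_\infty]=\tg{\infty}$, so $\tg{\infty}\le\gp'$. The identical argument yields $\tg{o}\le\gp'$, and since $\gp=\langle\tg{\infty},\tg{o}\rangle$ we conclude $\gp=\gp'$.

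For the second claim I would use the universal central extension $u\colon\widetilde{\lpg}\to\lpg$ of the perfect group~$\lpg$, whose kernel is the Schur multiplier $M(\lpg)$. By its universal property there is a homomorphism $f\colon\widetilde{\lpg}\to\gp$ with $\rho\circ f=u$. Since $\rho f=u$ is onto we have $f(\widetilde{\lpg})\,Z=\gp$; as $Z$ is central and $\widetilde{\lpg}$ is perfect, $\gp'=[f(\widetilde{\lpg})Z,f(\widetilde{\lpg})Z]=f(\widetilde{\lpg})'=f(\widetilde{\lpg})$, so $f$ is surjective because $\gp=\gp'$. A diagram chase then shows $f(M(\lpg))=Z$: the inclusion $\subseteq$ holds because $u$ kills $M(\lpg)$, and the reverse holds because any $z\in Z=\ker\rho$ has a preimage under~$f$ lying in $\ker u=M(\lpg)$. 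Hence $Z$ is isomorphic to a quotient of $M(\lpg)$.

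The step I expect to be the crux is the transfer of the commutator identity in the second paragraph. Lifting the relation $[\Delta_\infty,\lpg_\infty]=\Delta_\infty$ only shows $\tg{\infty}\le\gp'Z$ a priori, which is too weak --- indeed this is exactly what fails for a non-perfect central extension such as $\lpg\times Z$. What saves the argument is the geometric fact that $\gp_\infty$ normalises~$\tg{\infty}$, so that the commutators already lie in~$\tg{\infty}$ rather than merely in $\tg{\infty}Z$, together with the injectivity of~$\rho$ on~$\tg{\infty}$. Both inputs come from the translation structure (the regular action on $B_\infty\setminus\{\infty\}$ and the invariance of the centre), so it is here that the geometry, rather than the abstract group theory of the quotient alone, does the work.
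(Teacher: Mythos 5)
Your proposal is correct and takes essentially the same route as the paper: the perfectness of $\gp$ is obtained exactly as there, by combining the geometric normalization $[\tg{\infty},\gp_\infty]\le\tg{\infty}$, the injectivity of the restriction homomorphism on $\tg{\infty}$ (coming from the regular action on $B_\infty\setminus\{\infty\}$), and Lemma~\ref{simpleMoufangSetCommutators}, then concluding $\gp=\langle\tg{\infty},\tg{o}\rangle\le\gp'$. The only difference is that where the paper simply cites the literature for the standard fact that the kernel of a perfect central extension of~$\lpg$ is a quotient of the Schur multiplier, you reprove that fact via the universal central extension of~$\lpg$ --- a correct, standard argument, so this is an inlined citation rather than a genuinely different approach.
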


\begin{proof}
  If $z\in B_\infty$, $\tau\in\tg{z}$ and
  $\gamma\in\gp_z$, then $\gamma^{-1}\tau\gamma\in\tg{z}$ and 
  $[\tau,\gamma] =
  \tau^{-1}\gamma^{-1}\tau\gamma\in\tg{z}$.
  Since $\tg{z}$ acts regularly on~$B_\infty\setminus\{z\}$, every
  element of $\tg{z}$ is determined by its action
  on~$B_\infty$, i.e.\ by its image in~$\lpg$.
  By \ref{simpleMoufangSetCommutators} every element of $\tg{z}$
  is a product of elements in $\tg{z}$ that are commutators.
  Hence $\tg{z} \le G'$ for every $z\in B_\infty$, and therefore
  $G'=G$.
  
  The kernel~$\gp_{[B_\infty]}$ of the action on~$B_\infty$ is the
  center of $\gp$, so the perfect group $G$ is a central extension of
  $\lpg = G / \gp_{[B_\infty]}$. Therefore $\gp_{[B_\infty]}$ is
  isomorphic to a quotient of the Schur multiplier of $\lpg$; see
  \cite[2.1.7]{MR1200015}, \cite[V.23.3, page 629]{MR0224703}, or
  \cite[33.8\,(4) p.\,169]{MR1777008}.
\end{proof}

\section{Sharply two-transitive groups}

\begin{prop}\label{PSL23}
  If $q\le 3$ then\/ $\UU$ is the hermitian unital of order~$q$.
\end{prop}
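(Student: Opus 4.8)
The plan is to handle the two admissible values $q=2$ and $q=3$ separately; they are grouped under sharply two-transitive groups because for $q\le 3$ the block $B_\infty$ carries at most $q+1=4$ points, so by Theorem~\ref{finiteMoufang} the little projective group $\lpg$ cannot be one of the simple groups listed there — the smallest degree occurring among those is~$5$, for $\PSL2{\FF_4}$. Hence $\lpg$ is sharply two-transitive and Proposition~\ref{PropSimple} is unavailable, so a hands-on analysis is required.

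For $q=2$ no group theory is needed: a unital of order~$2$ is a $2$-$(9,3,1)$-design, i.e.\ a Steiner triple system on nine points, and up to isomorphism there is exactly one such system, the affine plane $\mathrm{AG}(2,3)$. Since this plane is the hermitian unital of order~$2$, the claim follows at once, even without the translation hypothesis.

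For $q=3$ the Moufang set on the four points of $B_\infty$ is sharply two-transitive of degree~$4$, whence $\lpg\cong\PSL2{\FF_3}\cong\Alt4$, and $\gp$ is a central extension of $\Alt4$ by $Z=\gp_{[B_\infty]}$. Each root group $\tg z$ with $z\in B_\infty$ has order~$3$ and maps isomorphically to its image in $\lpg$, so $\tg z\cap Z=1$. I would then exploit that only $q^3-q=24$ points lie outside $B_\infty$: Corollary~\ref{applyRigidity} gives that $Z$ acts semi-regularly there, and I would aim to promote this to semi-regularity of $\gp$, so that $|\gp|=12\,|Z|$ divides~$24$; this forces $|Z|\le 2$ and $|\gp|\in\{12,24\}$, with the central extension of interest being $\SL2{\FF_3}=2.\Alt4$, as in the hermitian unital.

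It remains to recover the incidence structure from these group-theoretic data and to recognise it as hermitian, which is where \cite[2.3]{MR3090721} enters. Since $\gp$ fixes $B_\infty$, acts on its four points as $\Alt4$, and acts (semi-)regularly on the $24$ outer points, every block meeting $B_\infty$ in a point $z$ is $z$ together with a $\tg z$-configuration of outer points, and the design axioms should propagate these into a complete and essentially unique block set. The principal obstacle is precisely this reconstruction for $q=3$: converting the central-extension data into a single incidence structure and identifying it with the hermitian unital, equivalently ruling out the remaining, non-hermitian unitals of order~$3$. The case $q=2$, by comparison, is settled outright by the uniqueness of the $2$-$(9,3,1)$-design.
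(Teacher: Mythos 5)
Your $q=2$ case is fine and coincides with the paper's (uniqueness of the $2$-$(9,3,1)$-design), and your identification $\lpg\cong\Alt4$ for $q=3$ also matches. But from there your plan has two genuine gaps. First, the pivot of your argument --- ``promote'' the semi-regularity of $Z$ on $U\setminus B_\infty$ to semi-regularity of all of $\gp$ --- is exactly what you cannot assume: semi-regularity of $\gp$ outside $B_\infty$ is a \emph{conclusion} of the Main Theorem, and in the paper it is only obtained (via \cite[3.5]{MR3090721}) \emph{after} the isomorphism type $\gp\cong\SL2{\FF_3}$ has been pinned down; no argument for it is available at the point where you want to use it. Second, even granting $|\gp|\in\{12,24\}$, you have not excluded $Z=1$, i.e.\ $\gp\cong\Alt4\cong\PSL2{\FF_3}$. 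This case cannot be ruled out by counting or by abstract extension theory: the paper kills it with \cite[2.3]{MR3090721}, a unital-theoretic fact asserting that the center $Z=\gp_{[B_\infty]}$ has \emph{even} order, which yields a central involution; combined with the observation that $Z\le\gp'$ (since $\gp=\gp'\tg{z}$ for a translation group $\tg{z}$ of order $3$), this makes $\gp$ a covering group of $\Alt4$ and forces $\gp\cong\SL2{\FF_3}$. You cite \cite[2.3]{MR3090721}, but for the wrong step --- it belongs here, not in the reconstruction.

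The part you explicitly concede (``the principal obstacle'') is in fact the bulk of the paper's proof for $q=3$, so the proposal stops short of the actual theorem. The paper proceeds as follows: once $\gp\cong\SL2{\FF_3}$ acts regularly on the $24$ outer points, the central involution $\zeta$ partitions $U\setminus B_\infty$ into $6$ fixed blocks; one of them is stabilized by a subgroup $S$ of order $4$, and after identifying the outer points with the elements of $\gp$ via $\gamma\mapsto a^\gamma$, the blocks through $a$ not meeting $B_\infty$ produce a set $\mathcal{D}$ of $4$ blocks with trivial stabilizers. The uniqueness (up to conjugation) of the pair $(S,\mathcal{D})$, proved in \cite[3.3]{MR3533345} for the $\SL2{\FF_3}$-construction of \cite[2.1]{MR3533345}, is what identifies $\UU$ with the hermitian unital of order $3$. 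Without this reconstruction-and-uniqueness step (or some substitute for it), nothing rules out a non-hermitian unital of order $3$ admitting the same group data, and the proposition remains unproved.
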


\begin{proof}
  Every unital of order $2$ is isomorphic to the hermitian one, see
  e.g.\ \cite[10.16]{MR1189139}.
  Now let $q=3$. Since $\lpg\le \operatorname{S}_4$ is generated by
  elements of order $3$, we have $\lpg = \Alt4 \cong \PSL2{\FF_3}$; in
  particular, $\lpg$ is sharply two-transitive. 
  By~\cite[2.3]{MR3090721} the center $Z=\gp_{[B_\infty]}$ has even order,
  so there exists a central involution~$\zeta$
  in~$\gp$. %

  The product~$\gp'Z$ induces the commutator group $(\lpg)'\cong\Cg2^2$.
  Thus $\gp'Z$ has index~$3$, and~$\gp'$ acts transitively
  on~$B_\infty$. For $z\in B_\infty$, the translation group~$\tg{z}$
  is not contained in~$\gp'$. We obtain $\gp=\gp'\tg{z} \gp' =
  \gp'\tg{z}$, and $\gp'$  has index~$3$ in~$\gp$. %
  This means that $Z\le\gp'$, and~$\gp$ is a 
  covering group of~$\Alt4$. %
  Then $\gp\cong \SL2{\FF_3}$ by~\cite[2.12.5]{MR1200015}.
  This group acts regularly on $U\setminus B_\infty$,
  see~\cite[3.5]{MR3090721}.

  We verify that~$\UU$ is obtained by the construction described
  in~\cite[2.1]{MR3533345}. %
  The central involution~$\zeta$ does not fix any point apart from
  those on~$B_\infty$. Therefore, the point set $U\setminus B_\infty$
  is partitioned by fixed blocks of~$\zeta$; these are obtained as the
  blocks joining $x\in U\setminus B_\infty$ with its image
  under~$\zeta$. %
  The group~$\gp$ acts on this set of fixed blocks. There are~$6$ such
  blocks, and at least one of them is fixed by a subgroup~$S$ of
  order~$4$ in~$\gp$. We pick a point~$a$ on that block and identify
  the elements of~$\gp$ with the affine points via
  $\gamma\mapsto a^\gamma$. Then the block in question is~$S$.
  
  There are~$4$ blocks through~$a$ that join~$a$ to points
  on~$B_\infty$, their intersections with $U\setminus B_\infty$ are
  identified with the Sylow $3$-subgroups (viz., the translation
  groups) in~$\gp$. Let~$D$ be any one of the remaining~$4$ blocks
  through~$a$. Then~$D$ is not stabilized by any translation, and not
  stabilized by~$\zeta$. As $\SL2{\FF_3}$ contains only one
  involution, we infer that the stabilizer of~$D$ in~$\gp$ is
  trivial. Therefore, the set
  $\mathcal{D} \coloneqq \smallset{D\delta^{-1}}{\delta\in D}$
  consists of~$4$ different blocks through~$a$.

  It has been proved in~\cite[3.3]{MR3533345} that the subgroup~$S$
  and the set~$\mathcal{D}$ are unique, up to conjugation. The points at
  infinity are the centers of translations. Therefore each such point
  is incident with those blocks whose points outside~$B_\infty$ form an
  orbit under the corresponding translation group.
  This completes the proof that~$\UU$ is isomorphic to the hermitian
  unital~$\UU_{\mathcal{H}_3}$, see~\cite[3.3]{MR3533345}.  
\end{proof}

Now we determine certain central extensions of finite sharply two-transitive
permutation groups; the following result is a variation of
\cite[Lemma 1.1]{MR533094} that is suitable for our purpose.

\begin{theo}\label{centralext}
Let $(G,X)$ be a finite sharply two-transitive permutation group
with $|X| >1$ and let $p$ be the prime dividing $|X|$. If $E$ is
a central extension of $G=E/Z$ by a group $Z$ of order $p$,
then $E$ splits over $Z$ (as a direct product $E=Y\times Z$ with
$Y\cong G$), or we have one of the following:
\begin{enumerate}
\item $|X| = 2 = |G|$ and $E$ is cyclic of order $4$.
\item $|X| = 4$, $G = \Alt4 \cong \PSL 2 {\FF_3}$ and $E\cong \SL 2 {\FF_3}$.
\item $|X| = p^2 \in \{ 3^2, 5^2, 7^2, 11^2 \}$ and $E = P \rtimes H$
where $P$ is the Heisenberg group of order $p^3$ and $H$ is isomorphic
to $Q_8$, $\SL 2 {\FF_3}$, $2^- \Sym4$ or $\SL 2 {\FF_5}$, respectively.
\end{enumerate}
\end{theo}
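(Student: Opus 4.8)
The plan is to exploit the classical structure of a finite sharply two-transitive group and then lift it to $E$. First I would recall that $(G,X)$ has a regular normal subgroup $N$, the Frobenius kernel, which is elementary abelian of order $|X|=p^n$; the point stabiliser $H:=G_x$ has order $p^n-1$ and acts fixed-point-freely on $N$, so that $G=N\rtimes H$ with $H$ regular on $N\setminus\{0\}$. Since $\gcd(|H|,p)=1$, the central subgroup $Z$ of order $p$ is a normal (central) Sylow $p$-subgroup of the preimage $\widetilde H$ of $H$ in $E$; by Schur--Zassenhaus it has a complement $H_0\cong H$, and centrality gives $\widetilde H=H_0\times Z$. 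Writing $\widetilde N$ for the preimage of $N$, a $p$-group of order $p^{n+1}$, one has $\widetilde N\cap H_0=1$ by coprimality and $|\widetilde N|\,|H_0|=|E|$, hence $E=\widetilde N\rtimes H_0$. Thus everything is controlled by $\widetilde N$, a central extension of the elementary abelian $N$ by $Z$, of class at most $2$ with $\widetilde N'\le Z$.

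Next I would reduce the problem to the existence of an $H$-invariant alternating form on $N$. If $\widetilde N$ is abelian of exponent $p$, then $\widetilde N$ is an $\FF_pH$-module and, since $p\nmid|H|$, Maschke's theorem yields an $H$-invariant complement $N_0\cong N$ to $Z$, giving the split case $E=(N_0\rtimes H)\times Z=G\times Z$. The possibility that $\widetilde N$ is abelian of exponent $p^2$ is excluded whenever $H\ne 1$: the $p$-power map would furnish a nonzero $H$-equivariant homomorphism from $N$ onto the trivial module $Z$, i.e.\ a nonzero element of $(N^*)^H$, contradicting $(N^*)^H=0$ (which holds because $H$ acts fixed-point-freely). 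The only survivor is $H=1$, that is $|X|=2$, $G\cong\mathbb{Z}/2$ and $E\cong\mathbb{Z}/4$, which is case~(a). If instead $\widetilde N$ is non-abelian, then $\widetilde N'=Z$ and the commutator induces a nonzero $H$-invariant alternating form $c\colon N\times N\to Z\cong\FF_p$. Equivalently, since $p\nmid|H|$ one has $H^2(G,\FF_p)\cong H^2(N,\FF_p)^H$, whose diagonal summand $(N^*)^H$ vanishes, so a non-split extension is detected exactly by a nonzero class in $(\Lambda^2N^*)^H$.

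The core of the argument is to show that such a form forces $n=2$ together with a very restricted $H$, and I expect this to be the main obstacle. Let $R$ be the radical of $c$; it is $H$-invariant, the induced form on $N/R$ is non-degenerate (so $\dim(N/R)$ is even), and $H$ acts fixed-point-freely on $N/R$ since no non-identity element has eigenvalue $1$ on any subquotient. Hence $|H|=p^n-1$ divides $p^{\dim(N/R)}-1$, which forces $\dim(N/R)=n$; thus $R=0$, the form is non-degenerate, $n$ is even, and $H\le\Sp{n}{p}$ acts regularly on $N\setminus\{0\}$. So $H$ is the multiplicative group of a finite near-field whose additive group carries an $H$-invariant symplectic form. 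Ruling out $n\ge 4$ is the delicate point: a primitive prime divisor of $p^n-1$ (Zsigmondy, the exceptions not arising here) makes any element of corresponding order act irreducibly, so $N$ is an irreducible self-dual $\FF_pH$-module of symplectic type, and combining this with Zassenhaus' classification of finite near-fields eliminates all such $H$. Only $n=2$ survives, where $\Sp2p=\SL2{\FF_p}$ and an invariant form exists precisely when $H\le\SL2{\FF_p}$.

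Finally I would apply Dickson's classification of subgroups of $\SL2{\FF_p}$ in the case $n=2$. For odd $p$ a regular fixed-point-free subgroup of $\SL2{\FF_p}$ has order $p^2-1$ and contains no unipotent elements, so it is a quaternion or binary polyhedral group; the constraint $p^2-1\in\{8,24,48,120\}$ singles out $p\in\{3,5,7,11\}$ and $H\cong\Q8,\ \SL2{\FF_3},\ 2^{-}\Sym4,\ \SL2{\FF_5}$, respectively. In each case the non-abelian $\widetilde N$ is the extraspecial group determined by the symplectic form, i.e.\ the Heisenberg group of order $p^3$, giving $E=P\rtimes H$ as in case~(c). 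For $p=2$ the same pipeline at $n=2$ produces $H\cong\mathbb{Z}/3$ acting on $N\cong\FF_2^2$, which forces $\widetilde N\cong\Q8$ and $E\cong\SL2{\FF_3}$ with $G\cong\Alt4\cong\PSL2{\FF_3}$, namely case~(b). Collecting the outcomes---the case $H=1$, the four two-dimensional near-fields over odd primes, and the single exceptional even case---reproduces (a), (b) and (c), while in every other situation the extension splits as $E=G\times Z$.
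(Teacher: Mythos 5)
Your skeleton is essentially the paper's own: Schur--Zassenhaus gives $E=P\rtimes H$ (your $\widetilde N$, $H_0$ are the paper's $P$, $H$), the abelian case splits off $Z$, and the non-abelian case makes $P$ extraspecial with $H$ embedded in $\Sp{n}{\FF_p}$ via the invariant commutator form; your direct invariance argument (plus the radical count) is even a little more elementary than the paper's appeal to Winter's theorem, and your use of Dickson's subgroup list of $\SL2{\FF_p}$ at $n=2$ is a sound alternative to the paper's use of Zassenhaus' list of exceptional nearfields. However, there is a genuine gap exactly at the step you yourself flag as ``the delicate point'': the elimination of $n\ge 4$ is asserted, not proved. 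The Zsigmondy observation eliminates nothing on its own: a primitive prime divisor $\ell$ of $p^n-1$ automatically divides $p^{n/2}+1$ (it cannot divide $p^{n/2}-1$), so an element of order $\ell$ sits comfortably in a nonsplit torus of $\Sp{n}{\FF_p}$, and ``irreducible self-dual module of symplectic type'' is precisely the structure such a subgroup has --- no contradiction arises at this level (irreducibility of $N$ under $H$ is in any case free from transitivity; moreover at $(p,n)=(2,6)$, a case that genuinely occurs since sharply two-transitive groups of degree $64$ exist, Zsigmondy has no primitive prime divisor at all). What is actually needed, and what the paper does, is arithmetic on the cyclic core of a Dickson nearfield: $C=H\cap\GL 1{\FF_{p^n}}$ has order at least $(p^n-1)/n$; if $C$ is reducible on $\FF_p^n$ it lies in a proper subfield, forcing $(p^n-1)/n\le p^{n/2}-1$, while if $C$ is irreducible its order must divide $p^{n/2}+1$ (irreducible cyclic subgroups of $\Sp{n}{\FF_p}$ lie in such tori, the Winter/Huppert fact the paper cites); either way $p^{n/2}-1\le n$, which fails for $n\ge 6$, and $n=4$ requires the separate check that $p=2$, $|H|=15$ is cyclic irreducible yet $15$ does not divide $2^2+1$. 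Without this (or an equivalent) computation the theorem is unproved in its main case.

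A secondary flaw: ``the extraspecial group determined by the symplectic form, i.e.\ the Heisenberg group'' is not a valid deduction. For odd $p$ there are \emph{two} extraspecial groups of order $p^3$ --- exponent $p$ (Heisenberg) and exponent $p^2$ --- and they induce identical commutator forms on $P/Z$, so the form does not determine $P$. You must exclude exponent $p^2$ separately; your own technique does it, since for odd $p$ in a class-$2$ group with $|P'|=p$ the map $x\mapsto x^p$ is a homomorphism and would produce a nonzero element of $(N^*)^H$ (the paper instead quotes Winter's Corollary~1). Similarly, at $n=2$ your phrase ``quaternion or binary polyhedral'' silently discards cyclic and generalized quaternion groups of order $p^2-1$, which for $p\ge 5$ must be excluded by the element-order bound in $\SL2{\FF_p}$ (orders divide $p-1$, $p+1$ or $2p$); this is routine, but it is part of pinning down $\Q8$, $\SL2{\FF_3}$, $2^-\Sym4$, $\SL2{\FF_5}$ and should be said.
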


We describe the groups in item (c).
The Heisenberg group of order $p^3$ consists of all unipotent upper
triangular matrices in $\GL 3 {\FF_p}$. By $Q_8$ we denote the quaternion
group of order $8$, and $2^- \operatorname{S}_4$ is the binary octahedral group,
i.e.\ the double cover of $\operatorname{S}_4$ containing
just one involution, see \cite[3.2.21, p.~301]{MR648772} or
\cite[XII.8.4]{MR662826}; this double cover is isomorphic to the normalizer of
$\SL2 {\FF_3}$ in $\SL2{\FF_9}$. The extension groups $E$ in item~(c) do not
split over $Z$ since $P$ is not abelian.

\begin{proof}[Proof of\/ \ref{centralext}]
  It is well known that $|X|=p^n$ is a power of a prime $p$ and that
  the Sylow $p$-subgroup of $G$ is an elementary abelian normal
  subgroup of order $p^n$ in $G$; see e.g.\ \cite[7.3.1]{MR1357169} or
  \cite[8.4]{MR0237627} or \cite[XII.9.1]{MR662826}.

  Let $P$ be a Sylow $p$-subgroup of $E$. Then $|P|= p^{n+1}$ and
  $Z\le P$; moreover $P/Z$ is the regular normal subgroup of $G$,
  hence $P$ is normal in $E$. Each point stabilizer (or Frobenius
  complement) $G_x$ has order $p^n -1$, and its pre-image $E_x\le E$
  has order $(p^n-1)p$.  The group $E_x$ splits as a direct product
  $H\times Z$ with $H\cong G_x$ by the abelian (in fact, central) case
  of the Schur--Zassenhaus theorem; see \cite[9.1.2 or
  11.4.12]{MR1357169} or \cite[10.3]{MR0237627} or \cite[I.17.5,
  page~122]{MR0224703}. Then
  \[
    E = P \rtimes H
  \]
  and $H$ acts (by conjugation) sharply transitively on the set of
  non-trivial elements of~$P/Z$.

  If $H$ is trivial, then $|X| = 2 = |G|$, and $E$ splits or is cyclic
  as in item (a). From now on let $|H|>1$. Then $C_P (H) / Z$ is a
  proper $H$-invariant subgroup of $P/Z$, hence trivial. This means
  that $C_P (H) =Z$.  If $P$ is abelian, then
  $P= C_P (H) \times [P,H] = Z \times [P,H]$, see
  \cite[5.2.39]{MR569209} or \cite[10.1.6]{MR1357169} or
  \cite[III.13.4, p.\,350]{MR0224703}, and then
  $E = P \rtimes H = Z \times ([P,H] \rtimes H)$ splits over $Z$.

  Now let $P$ be non-abelian. Then $P'$ is a non-trivial subgroup of
  $Z$, as $P/Z$ is (elementary) abelian, hence $P' = Z$. The center of
  $P$ yields a proper $H$-invariant subgroup of $P/Z$; this subgroup
  is trivial, hence $Z$ is the center of $P$ (and the Frattini
  subgroup is $\Phi(P) = P^p P' = Z$). Thus $P$ is an extraspecial
  $p$-group.

  The commutator map gives a non-zero symplectic form $f$ on $P/Z$
  with values in the prime field $F_p$, and $f$ is not degenerate,
  hence $n=2m$ is even; see \cite[p.~140]{MR1357169} or
  \cite[III.13.7, p.\,353]{MR0224703}. The automorphism group $\gal H$
  induced by $H$ on $P$ has trivial intersection with the group of
  inner automorphisms of $P$ and acts trivially on $Z$, hence
  $H\cong \gal H$ is isomorphic to a subgroup of the symplectic group
  $\Sp{2m} {\FF_p}$ by Winter \cite[Theorem 1 or (3A)
  p.~161]{MR297859}.

  First we assume that the permutation group $(G,X)$ is of type~I %
  (in the notation of~\cite[XII.9.2]{MR662826}), which entails that
  $H\le \operatorname{\Gamma L}(1, \FF_{p^n}) = \GL 1 {\FF_{p^n}}
  \rtimes \Aut{\FF_{p^n}}$; see \cite[XII.9.2]{MR662826}. In another
  terminology, this means that the corresponding nearfield (with
  multiplicative group $H$) is a Dickson nearfield, compare
  \cite[p.\,834]{MR3667125}.  The cyclic group
  $H\cap \GL 1 {\FF_{p^n}} \le \FF_{p^n}^\ast$ has order at least
  $(p^n-1)/n$. If this cyclic group is reducible on $\FF_p^n$, then it
  is contained in a proper subfield of $\FF_{p^n}$, hence
  $(p^n-1)/n \le p^{n/2} -1$ and therefore $p^{n/2} +1 \le n$; if
  $H\cap \GL 1 {F_{p^n}}$ is irreducible, then its order divides
  $p^{n/2} +1$ by \cite[Cor.\,2]{MR297859} or \cite[Satz\,9.23,
  p.\,228]{MR0224703} as $H\le \Sp n {\FF_p}$. In both cases we have
  $2^{n/2} -1 \le p^{n/2} -1 \le n$, which is false for $n \ge 6$. If
  $n=4$ then $p=2$ and $|H|=15$, hence $H$ is cyclic and irreducible,
  but $15$ does not divide $2^2 +1$. %
  As $n=2m$ is even, there only remains the case where $n=2$, and
  $p\in\{2, 3\}$ follows.

  If $p=2$ then $|X| = 4$ and $G = \Alt4 \cong \PSL 2 {\FF_3}$;
  moreover, $E$ is a covering group of $\Alt4$ since $Z=P' \le E'$,
  hence $E\cong \SL 2 {\FF_3}$ by \cite[2.12.5]{MR1200015}, as in
  item~(b). For $p=3$ we have $|X| = 9$ and $|H|=8$. Each involution
  $h\in H$ induces on $P/Z \cong \FF_3^2$ a diagonalizable linear
  transformation $\gal h$ without eigenvalue $1$, hence
  $\gal h = - \mathrm{id}$.  Thus $H$ contains just one involution,
  and $H$ is cyclic or $H\cong Q_8$ (these two possibilities
  correspond to the two nearfields of order $9$, one of them being the
  field $\FF_9$). The cyclic case is ruled out because
  $\Sp 2 {\FF_3} = \SL 2{\FF_3}$ contains no element of order
  $8$. Thus $H\cong Q_8$ as in the first case of item~(c).

Now we assume that $(G,X)$ is not of type I. Then $n=2$ and there are
just seven possibilities for the isomorphism type of $H$, with $p\in
\{5, 7, 11, 23, 29, 59\}$: 
see \cite[XII.9.4]{MR662826} or \cite[20.3]{MR0237627} or \cite[2.4]{MR3667125}.
This rephrases a famous result of Zassenhaus, which says that there
are only seven finite nearfields which are not Dickson nearfields.
The condition $H\le \Sp 2 {\FF_p} = \SL 2 {\FF_p}$ excludes
four of these seven possibilities (those where $H$ contains  central
elements other than $\pm\mathrm{id}$), see \cite[XII.9.4,
XII.9.5]{MR662826} or \cite[2.4]{MR3667125}. This leads to the three
cases for $H$ in item (c) with $p\in \{5, 7, 11\}$. 

\goodbreak
For all our odd primes $p$, the extraspecial group $P$ of order $p^3$
has exponent $p$: otherwise the exponent is $p^2$ and some non-trivial
element of $P/Z$ is 
fixed by every automorphism of $P$ by \cite[Cor.~1]{MR297859}; this is
a contradiction to the action of $H$ on $P/Z$. Therefore $P$ is
isomorphic to the Heisenberg group of order $p^3$, see
\cite[5.5.1]{MR569209} or \cite[p.~355]{MR0224703}. 
\end{proof}

\begin{theo}\label{sharplyTwoTrs}
  If\/~$\lpg$ is sharply two-transitive on~$B_\infty$, then $q\le3$ %
  and\/~$\UU$ is the hermitian unital of order $q$.
\end{theo}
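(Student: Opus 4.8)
The plan is to let sharp two-transitivity fix the isomorphism type of $\lpg$ and the global shape of $\gp$, and then to play the rigidity of $\UU$ (Lemma~\ref{LemFix}, Corollary~\ref{applyRigidity}) against the central-extension analysis of Theorem~\ref{centralext}. First I would extract the skeleton: a finite sharply two-transitive group has prime-power degree, so $q+1=p^n$; it has an elementary abelian regular normal subgroup $N$ of order $q+1$, and every point stabiliser $\lpg_x$ is a Frobenius complement of order $q$. Since the root group $\tg x|_{B_\infty}$ fixes $x$, is regular on $B_\infty\setminus\{x\}$ and has order $q=|\lpg_x|$, it equals $\lpg_x$; hence $\tg x\cong\lpg_x$ and $\gp_x=\tg x\times Z$, where $Z=\gp_{[B_\infty]}$ is the centre. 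Thus $\gp$ is a central extension of the near-field group $\lpg$ by $Z$, exactly the situation of Theorem~\ref{centralext}.

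The decisive geometric input is that $\tg x$ acts freely on $U\setminus\{x\}$, its orbits being the blocks through $x$ with $x$ removed; hence $\tg x$ fixes setwise \emph{precisely} the blocks through $x$, as an invariant block $C\not\ni x$ would split into $\tg x$-orbits of size $q$, forcing the impossible $q\mid q+1$. Using this together with Corollary~\ref{applyRigidity} and the fixed-point bound of Lemma~\ref{LemFix}, I would show that $\gp$ acts semiregularly on $U\setminus B_\infty$: a nontrivial $g\in\gp_a$ (for $a\notin B_\infty$) maps to an element of $\lpg$ fixing at most one point of $B_\infty$, and if $\bar g$ fixes some $x$ then $g\in\gp_x=\tg x\times Z$ forces a nontrivial central element to fix the block $xa$, which freeness of $\tg x$ and semiregularity of $Z$ exclude. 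Semiregularity gives $|\gp|=|Z|\,q(q+1)\mid q(q-1)(q+1)=|U\setminus B_\infty|$, so $|Z|\mid q-1$; with $p\mid q+1$ this yields $\gcd(|Z|,p)=1$ whenever $p$ is odd.

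Feeding this into Theorem~\ref{centralext} (after passing, when $p\mid|Z|$, to a central extension of $\lpg$ by a group of order $p$), the possible outcomes are split, or $q\in\{1,3\}$, or the extraspecial case~(c) with $q+1=p^2$, $p\in\{3,5,7,11\}$. Case~(c) requires $p\mid|Z|$ for the odd prime $p$, contradicting $\gcd(|Z|,p)=1$ (equivalently, $p\mid q-1=p^2-2$ is impossible), so it is excluded; the only non-split survivor is $q=3$, where part~(b) gives $\gp\cong\SL2{\FF_q}$, exactly as in the $q=3$ case of Proposition~\ref{PSL23}. For every remaining $q\ge4$ the extension splits --- by Theorem~\ref{centralext} when $p=2$, and by Schur--Zassenhaus when $p$ is odd (then $|Z|$ divides the odd number $q-1$ and is coprime to $|\lpg|=q(q+1)$) --- so $\gp\cong\lpg\times Z$ and the problem reduces to $\lpg$ acting essentially faithfully on $\UU$.

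This split case is the main obstacle, and I expect it to need the incidence geometry of $\UU$ rather than group orders: here $Z$ may be trivial and every constraint above is consistent with $\gp$ acting freely on $U\setminus B_\infty$, so Theorem~\ref{centralext} is silent --- for instance for the field group $\lpg\cong\Sz2\cong\AGL1{\FF_5}$ at $q=4$. One must prove outright that no sharply two-transitive Moufang set of degree $q+1\ge5$ can be induced on $B_\infty$ by a $2$-$(q^3+1,q+1,1)$ design carrying all the translations $\tg x$. The natural line is to exploit the normal subgroup $N$ and (for odd $p$) the involutions $\iota_x\in\tg x$, each of which fixes exactly the single point $x$ of $U$, and to confront their fixed points and fixed blocks with Lemma~\ref{LemFix} and the block-orbit structure through $B_\infty$, forcing a contradiction once $q\ge4$. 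Once $q\le3$ is secured, Proposition~\ref{PSL23} identifies $\UU$ as the hermitian unital of order $q$.
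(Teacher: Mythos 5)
Your proposal is a reduction, not a proof: it contains two genuine gaps, the second of which you concede yourself.

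First, the semiregularity step fails. For a nontrivial $g\in\gp_a$ with $a\notin B_\infty$ you only treat the case where the induced permutation $\gal{g}\in\lpg$ fixes some $x\in B_\infty$. Since a sharply two-transitive group is a Frobenius group, the other possibility is that $\gal{g}$ is a nontrivial element of the regular normal subgroup, fixing \emph{no} point of $B_\infty$; your argument says nothing about this case, and it is exactly the hard one: an element of order $r$ (where $q+1=r^n$) lying over the Frobenius kernel and fixing an affine point. The paper's entire machinery --- the non-split central extension, Theorem~\ref{centralext}, Wiegold's theorem making the Sylow $r$-subgroup $R$ extraspecial of order $r^3$, and the final argument with an involution $\alpha\in\tg{o}$ and a subgroup $T\le R$ of order $r$ --- exists precisely to dispose of such elements; indeed, in the configuration the paper is driven to, the counting $r^3\nmid|U\setminus B_\infty|$ shows that subgroups of order $r$ with affine fixed points \emph{do} exist, so the contradiction must come from geometry, not from their nonexistence. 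Moreover, even in the case you do treat, your conclusion is not a contradiction: writing $g=\tau z$ with $\tau\in\tg{x}$, $z\in Z$, you only get that $z$ stabilizes the block $xa$ \emph{setwise}, and Corollary~\ref{applyRigidity} forbids $Z$ to fix affine points, not to stabilize blocks (in the hermitian unital of order $3$ the central involution of $\gp\cong\SL2{\FF_3}$ stabilizes six blocks setwise; see the proof of Proposition~\ref{PSL23}). Taking powers of $g=\tau z$ yields only that $\tau$ and $z$ have equal orders, which is harmless when $\gcd(q,|Z|)>1$ --- and you cannot assume $|Z|$ coprime to $q$, since that is downstream of the very semiregularity you are trying to prove.

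Second, the endgame is missing, as you admit (``the main obstacle'', ``one must prove outright''). Even granting semiregularity, you are left with $Z=1$ and a sharply two-transitive group $\lpg$ (for instance $\AGL1{\FF_5}$ at $q=4$) acting faithfully on $\UU$ with all translations present, and no counting helps there: $q+1$ divides $|U\setminus B_\infty|=q(q-1)(q+1)$, so the Frobenius kernel can act semiregularly outside $B_\infty$ without numerical contradiction. The paper never confronts this case, because it imports from \cite[2.3]{MR3090721} the fact that $r$ divides $|Z|$: hence $Z\neq 1$, the central extension $G/U$ of $\lpg$ by a group of order $r$ cannot split (a complement would contain every translation group, hence equal $G$), and Theorem~\ref{centralext} then forces $q+1=r^2$ with $r\in\{3,5,7,11\}$ and non-abelian Sylow $r$-subgroup, after which the extraspecial analysis and the involution argument produce the contradiction. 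Your plan runs in the opposite direction --- proving $|Z|$ coprime to $r$ for odd $r$ --- and without either that external divisibility fact or a genuinely new geometric idea, the split case cannot be closed.
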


\begin{proof}
  By Proposition \ref{PSL23} it suffices to show that $q\le 3$. Thus
  we assume that $q>3$ and aim for a contradiction.
 
  The degree $q+1$ of $\lpg$ is a power of some prime $r$, say
  $q+1=r^n$. %
  By \cite[3.1]{MR3090721} the kernel $G_{[B_\infty]}$ is the center
  $Z$ of $G$, and $r$ divides $|Z|$ by \cite[2.3]{MR3090721}. Thus we
  can choose a subgroup $U$ of index $r$ in $Z$; then $G/U$ is a
  central extension of $G^\dagger$ by the group $Z/U$ of order
  $r$. Such an extension $G/U$ does not split: if
  $G/U = Z/U \times Y/U$ then $Y$ contains all Sylow $s$-subgroups of
  $G$ with $s\ne r$, hence all translation groups $\Gamma_{[x]}$ with
  $x\in B_\infty$; thus $Y=G$, which is a contradiction to
  $|Z/U| = r$.

  Theorem \ref{centralext} implies that $n=2\ne r$ and that the Sylow
  $r$-subgroup of $G/U$ is not abelian (and more, as in item (c), but
  we do not need more).  Let $R$ be a Sylow $r$-subgroup of $G$ and
  let $H:= \Gamma_{[o]}$.  Then $RZ/Z$ is the regular normal subgroup
  of $G^\dagger$, and $R$ is characteristic in $RZ$, which is normal
  in $G$; hence $R$ is normal in $G$.  The group $RHR= RH$ contains
  all conjugates of $H$ in $G$, hence $G=RH = R\rtimes H$. Thus
  $Z = G_{[B_\infty]} = G_{o, \infty} = (R_o H)_\infty = (R\cap
  Z)H_\infty = R\cap Z$, which gives $Z\le R$.  The group $R$ is not
  abelian, but $R/Z$ is abelian and has order $r^2$; thus $Z$ is the
  center of $R$. %
  Now a (special case of a) result of Wiegold says that $|R'|$ divides
  $r$; see \cite[Theorem 2.1]{MR0179262}, \cite[p.~261]{MR648772},
  \cite[Lemma 3.1.1, p.~113]{MR1200015} or \cite[page 637]
  {MR0224703}.  We claim that $R'=Z$. Otherwise we can choose $U$ as
  above with $R' \le U < Z$, and then $R/U$ is an abelian Sylow
  $r$-subgroup of $G/U$, contrary to Theorem \ref{centralext}.

  Thus $R' = Z$ has order $r$, and $R$ is an extraspecial group of
  order $r^3$. Since $r\ne 2$ the group $H= \Gamma_{[o]}$ contains an
  involution $\alpha$ inducing inversion on $R/R'=R/Z$, hence $\alpha$
  fixes each subgroup between $Z$ and $R$.

  Each subgroup of order~$r^2$ is normal in~$R$ with abelian quotient,
  and thus contains $R'=Z$. %
  As the group~$H$ acts transitively on the set of non-trivial
  elements of~$R/R'$, it also acts transitively on the set of
  subgroups of order~$r^2$ in~$R$. If $S$ is one of those subgroups
  then~$R$ acts transitively on the set of non-central subgroups of
  order~$r$ in~$S$. There are~$r$ such subgroups, and the
  involution~$\alpha$ (which leaves~$S$ invariant) fixes at least one
  of them, say~$T$.

  The number of points not on~$B_\infty$ is
  $q^3+1-(q+1) = r^2(r^2-1)(r^2-2)$, and not divisible by $r^3 = |R|$.
  Therefore, there exists some subgroup of order~$r$ fixing at least
  one point~$x$ not on~$B_\infty$. That subgroup is not contained in the
  center because the latter acts semi-regularly on $U\setminus B_\infty$,
  see~\cite[1.7]{MR3090721}. We have noted in the previous paragraph
  that the non-central subgroups of order~$r$ form a single conjugacy
  class in~$HR$. Thus the group~$T$ fixes some affine point~$x$.  Then
  $T = T^\alpha$ fixes also~$x^\alpha$, and the point~$o$
  where~$B_\infty$ meets the block joining~$x$ and~$x^\alpha$. This
  contradicts the fact that~$T$ induces a subgroup of order~$r$ in the
  regular normal subgroup on~$B_\infty$.
\end{proof}

\goodbreak
\section{Unitary groups}

\begin{lemm}\label{conjugacy}
  Let\/ $r$ be  prime power, and let\/ $d$ be a divisor of~$r+1$. Then
  the following hold:
  \begin{enumerate}
  \item Every element of order~$d$ in $\GL{3}{\FF_{r^2}}$
    is diagonalizable over~$\FF_{r^2}$.
  \item If~$A$ is an element of order~$d$ in $\SU{3}{\FF_{r^2}|\FF_r}$
    then the characteristic polynomial of~$A$ is
    $X^3-t_AX^2+\gal{t_A}X-1$, where~$t_A$ is the trace of~$A$.
  \item Two elements of order~$d$ in~$\SU{3}{\FF_{r^2}|\FF_r}$ are
    conjugates under~$\SU{3}{\FF_{r^2}|\FF_r}$ if, and only if, they
    have the same trace. %
  \end{enumerate}
\end{lemm}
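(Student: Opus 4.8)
The plan is to work with the natural $3$-dimensional unitary representation and exploit that $d \mid r+1$, so that $d$ divides the order of $\FF_{r^2}^\ast$ but the relevant eigenvalues live in a tightly controlled subgroup. For part~(a), I would first observe that an element $g$ of order $d$ satisfies $g^d = 1$, so its minimal polynomial divides $X^d - 1$. Since $d \mid r+1 \mid r^2-1 = |\FF_{r^2}^\ast|$, the field $\FF_{r^2}$ contains a full set of $d$-th roots of unity, so $X^d-1$ splits into distinct linear factors over $\FF_{r^2}$ (distinctness because $\gcd(d,r)=1$, as $d \mid r+1$). Hence the minimal polynomial of $g$ is a product of distinct linear factors over $\FF_{r^2}$, which is exactly the criterion for diagonalizability. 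This is the clean, self-contained part.

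For part~(b), let $A \in \SU{3}{\FF_{r^2}|\FF_r}$ have order $d$ with trace $t_A$. By part~(a) the eigenvalues $\lambda_1,\lambda_2,\lambda_3$ lie in $\FF_{r^2}$ and are $d$-th roots of unity, so each satisfies $\lambda_i^d = 1$; combined with $d \mid r+1$ this forces $\lambda_i^{r+1} = 1$, i.e.\ $\gal{\lambda_i} = \lambda_i^{r} = \lambda_i^{-1}$, where $\gal{\phantom{x}}$ denotes the field involution $x \mapsto x^r$. I would then write the characteristic polynomial as $\prod_i (X-\lambda_i) = X^3 - e_1 X^2 + e_2 X - e_3$ with $e_1 = t_A = \sum \lambda_i$, $e_3 = \lambda_1\lambda_2\lambda_3 = \det A = 1$ (as $A \in \SU{}{}$), and $e_2 = \sum_{i<j}\lambda_i\lambda_j$. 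The key identity is $e_2 = \sum_{i<j}\lambda_i\lambda_j = \lambda_1\lambda_2\lambda_3 \sum_i \lambda_i^{-1} = e_3 \sum_i \gal{\lambda_i} = \gal{e_1} = \gal{t_A}$, using $e_3=1$ and $\lambda_i^{-1} = \gal{\lambda_i}$. This yields the asserted characteristic polynomial $X^3 - t_A X^2 + \gal{t_A} X - 1$.

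For part~(c), the forward direction is immediate since conjugation preserves the trace. For the converse, suppose $A,B$ have order $d$ and the same trace $t$. By part~(b) they share the characteristic polynomial $X^3 - tX^2 + \gal{t}X - 1$, and by part~(a) both are diagonalizable over $\FF_{r^2}$; hence they are conjugate \emph{in} $\GL{3}{\FF_{r^2}}$, having the same diagonal form. The real content is to promote this to conjugacy \emph{inside} $\SU{3}{\FF_{r^2}|\FF_r}$. The plan is to diagonalize $A$ and $B$ in orthonormal (unitary) eigenbases with respect to the defining hermitian form: the eigenspaces of a semisimple isometry are mutually orthogonal and nondegenerate, so I can choose unitary bases on each eigenspace, assembling a unitary change-of-basis matrix $P$ with $P^{-1}AP = B$. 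A determinant adjustment (rescaling one basis vector by a norm-one scalar, using that the norm map $\FF_{r^2}^\ast \to \FF_r^\ast$ is surjective) brings $P$ into $\SU{}{}$ without disturbing the diagonalization.

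I expect the main obstacle to be part~(c): the passage from $\GL{}{}$-conjugacy to $\SU{}{}$-conjugacy requires genuinely using the hermitian geometry (orthogonality and nondegeneracy of eigenspaces of a unitary semisimple element) rather than a purely polynomial argument, and one must be careful when eigenvalues coincide (a repeated eigenvalue gives a $2$-dimensional nondegenerate eigenspace, where any unitary basis works) so that the determinant can always be corrected inside $\SU{}{}$. Parts (a) and (b) are essentially formal consequences of $d \mid r+1$ together with the relations $\det A = 1$ and $\gal{\lambda} = \lambda^{-1}$ for eigenvalues.
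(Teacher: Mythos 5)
Your parts (a) and (b) are essentially the paper's own argument: the minimal polynomial divides $X^{d}-1$ (the paper uses $X^{r+1}-1$, same thing), which splits with simple roots over $\FF_{r^2}$, and the coefficient computation from $\det A=1$ together with $\gal{\lambda}=\lambda^{-1}$ for every eigenvalue is identical. Part (c) is where you genuinely diverge. The paper passes from equality of characteristic polynomials to conjugacy in $\GL{3}{\FF_{r^2}}$, then invokes the literature on conjugacy classes in finite unitary groups (Springer--Steinberg, Wall, Ennola) to get conjugacy in $\U{3}{\FF_{r^2}|\FF_r}$, and finally adjusts the determinant by a diagonal unitary element. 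You replace the citation by a direct geometric argument: for eigenvectors $x,y$ with eigenvalues $\lambda,\mu$ one has $h(x,y)=h(Ax,Ay)=\lambda\gal{\mu}\,h(x,y)$, and since here $\gal{\mu}=\mu^{-1}$, distinct eigenspaces are orthogonal and hence each eigenspace is nondegenerate; choosing orthonormal bases of matching eigenspaces of $A$ and of $B$ (possible because every nondegenerate hermitian space over a finite field admits an orthonormal basis) assembles a conjugating element of $\U{3}{\FF_{r^2}|\FF_r}$, and the same determinant-repair trick finishes inside $\SU{3}{\FF_{r^2}|\FF_r}$. Your route is self-contained modulo the standard uniqueness of nondegenerate hermitian forms over finite fields, whereas the paper's is shorter but leans on deeper conjugacy-class references; both approaches share the final reduction from $\operatorname{U}$ to $\operatorname{SU}$.

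Two points should be tightened. First, your blanket claim that eigenspaces of a semisimple isometry are mutually orthogonal and nondegenerate is false in general: for a unitary map, $E_\lambda\perp E_\mu$ only when $\lambda\gal{\mu}\ne 1$, and an eigenspace $E_\lambda$ with $\lambda\gal{\lambda}\ne1$ is totally isotropic, dually paired with $E_{\gal{\lambda}^{-1}}$. It is precisely the hypothesis $d\mid r+1$, forcing $\gal{\mu}=\mu^{-1}$ for all eigenvalues, that validates orthogonality here, so this dependence must be stated explicitly. Second, the determinant adjustment does not need surjectivity of the norm map: for unitary $P$ one automatically has $\det P\cdot\gal{\det P}=1$, so $s=(\det P)^{-1}$ is itself a norm-one scalar, and rescaling a single eigenvector by $s$ keeps the basis orthonormal and the matrix conjugating.
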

\begin{proof}
  (a) %
  Let $A\in\GL{3}{\FF_{r^2}}$ be an element of order~$d$. The minimal
  polynomial of~$A$ then divides $X^{r+1}-1$, and every characteristic
  root is a root of that polynomial. %
  These roots lie in~$\FF_{r^2}$ because %
  $r+1$ divides the order of the multiplicative group of~$\FF_{r^2}$.
  As the minimal polynomial has only simple roots, the matrix~$A$ is
  diagonalizable in~$\GL{3}{\FF_{r^2}}$.

  (b) %
  Now assume $A\in\SU{3}{\FF_{r^2}|\FF_r}$. %
  Let $\lambda$ be one of the characteristic roots of~$A$, %
  then $\gal\lambda\lambda = \lambda^r\lambda = 1$. 
  In the characteristic polynomial
  $\det(X\cdot\id-A) = X^3+c_2X^2+c_1X+c_0$, the constant~$c_0$ equals
  $-\det{A} = -1$. The coefficient $c_2$ equals~$-t_A$, where~$t_A$ is
  the trace of~$A$. Expanding the product of the linear factors, we
  obtain $t_A = -c_2$ as the sum $\lambda_0+\lambda_1+\lambda_2$ of
  all characteristic roots of~$A$.  The coefficient~$c_1$ is obtained
  as
  $\lambda_0\lambda_1+\lambda_1\lambda_2+\lambda_2\lambda_0 =
  \lambda_2^{-1}+\lambda_0^{-1}+\lambda_1^{-1} =
  \gal{\lambda_2}+\gal{\lambda_0}+\gal{\lambda_1} = \gal{t_A}$. %

  (c) %
  Let~$A$ and~$B$ be elements of order~$d$
  in~$\SU{3}{\FF_{r^2}|\FF_r}$. Clearly $t_A = t_B$ holds if~$A$
  and~$B$ are conjugates. %
  Conversely, assume $t_A=t_B$.  We have seen above that~$A$ and~$B$
  have the same characteristic polynomial. Therefore, they are
  conjugates in~$\GL{3}{\FF_{r^2}}$. %
  According to~\cite[I,\,3.5, III,\,3.22]{MR0268192} %
  (or~\cite[Case~A\,(ii), p.\,34]{MR0150210} or %
  \cite[Lemma~5 with remarks on p.~12]{MR0139651}) %
  they are also conjugates in the unitary group
  $\U{3}{\FF_{r^2}|\FF_r}$.

  Finally, the group $\U{3}{\FF_{r^2}|\FF_r}$ contains diagonal
  elements of arbitrary determinant
  in~$\smallset{s\in\FF_{q^2}}{s\gal{s}=1}$. As such diagonal matrices
  centralize each other diagonal matrix, we can adapt the conjugating
  element of $\U{3}{\FF_{r^2}|\FF_r}$ in such a way that the
  conjugation is achieved by an element of $\SU{3}{\FF_{r^2}|\FF_r}$.
\end{proof}

The following lemma is a consequence of results 
in~\cite[Thm.\,1.6, Thm.\,1.3]{MR3247775}; we give a direct proof for
the reader's convenience.  
  
\begin{lemm}\label{productOfRootElements}
  Let\/ $r=2^e$ and let $A\in \SU3{\FF_{r^2}|\FF_r}$ be non-central
  with $A^{r+1} =1$.  Then $A^2$ is the product of two elements of\/
  $\SU3{\FF_{r^2}|\FF_r}$ with orders dividing $4$. %
\end{lemm}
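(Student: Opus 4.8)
The plan is to first translate the hypothesis about ``orders dividing~$4$'' into one about unipotent elements. Since $r=2^e$, an element $g\in\SU3{\FF_{r^2}|\FF_r}$ has order dividing~$4$ if and only if it is unipotent: if $g^4=\id$ then the minimal polynomial of~$g$ divides $X^4-1=(X-1)^4$, so $g$ is unipotent; conversely, writing a unipotent $g=\id+N$ with $N^3=0$ (as $g$ acts on a $3$-dimensional space) gives $g^2=\id+N^2$ and $g^4=\id+N^4=\id$. Thus I must show that $A^2$ is a product of two unipotent elements of $\SU3{\FF_{r^2}|\FF_r}$.

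The second step extends the computation in~\ref{conjugacy}(b) from elements of order dividing $r+1$ to \emph{every} $M\in\SU3{\FF_{r^2}|\FF_r}$. Since $M$ preserves the hermitian form, $M^{-1}$ is conjugate in $\GL3{\FF_{r^2}}$ to $\gal M$ (its transpose-inverse equals the adjoint), so the eigenvalues of $M^{-1}$ are the images under $x\mapsto\gal x$ of those of~$M$; hence the characteristic polynomial of~$M$ is $X^3-tX^2+\gal t\,X-1$ with $t=\operatorname{tr}(M)$. In particular $M$ is unipotent if and only if $\operatorname{tr}(M)=1$, because for $t=1$ this polynomial equals $X^3+X^2+X+1=(X-1)^3$.

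This yields the key reformulation: to write $A^2=BC$ with $B,C$ unipotent it suffices to find a \emph{single} unipotent element $B$ with $\operatorname{tr}(B^{-1}A^2)=1$. Indeed $C:=B^{-1}A^2$ then lies in $\SU3{\FF_{r^2}|\FF_r}$ with trace~$1$, hence is unipotent by the previous paragraph, while $B^{-1}$ is unipotent together with~$B$. I would parametrise the unipotent elements explicitly---through the two opposite root groups $U^{+}$ and $U^{-}$, cut out by the norm relation $\gamma+\gal\gamma=\gal\beta\,\beta$---and turn $\operatorname{tr}(B^{-1}A^2)=1$ into one equation over $\FF_{r^2}$ in these parameters. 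When $A^2$ is regular semisimple this is readily solved with $B$ in a root group; equivalently, a product of two root elements already realises the characteristic polynomial $X^3-t_A^2X^2+\gal{t_A}^2X-1$ of $A^2$, and distinct eigenvalues force the product into the conjugacy class of $A^2$ by~\ref{conjugacy}(c).

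The step I expect to be the main obstacle is the solvability of this trace equation \emph{uniformly in $r$}, precisely when $A^2$ has a repeated eigenvalue---equivalently, when $A$ does. This is the non-central case with $A^2$ conjugate to $\operatorname{diag}(a,a^{-2},a)$ where $a^{r+1}=1$ and $a^3\neq1$. Here choosing $B$ inside a single root group fails: then $B^{-1}A^2$ is triangular with respect to the eigenbasis of~$A^2$, so $\operatorname{tr}(B^{-1}A^2)=\operatorname{tr}(A^2)=t_A^2$, which is never~$1$ since $A^2\neq\id$ is semisimple and hence not unipotent. One must therefore take $B$ to be a regular unipotent in general position (for instance a short root element conjugated by an opposite one), so that the cross terms make the trace depend genuinely on the parameters; solving the resulting equation, together with a direct check for the smallest fields, completes the proof.
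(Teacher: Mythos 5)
Your reduction is sound and even slightly more general than what the paper uses: in characteristic two an element of $\SU3{\FF_{r^2}|\FF_r}$ has order dividing $4$ exactly when it is unipotent; every $M\in\SU3{\FF_{r^2}|\FF_r}$ (not just those of order dividing $r+1$) has characteristic polynomial $X^3-tX^2+\gal{t}X-1$ with $t=\operatorname{tr}(M)$; hence unipotent is equivalent to trace~$1$, and it suffices to produce one unipotent $B$ with $\operatorname{tr}(B^{-1}A^2)=1$. Your treatment of the regular semisimple case also goes through and is essentially the paper's argument: a product of elements from two opposite root groups realises any prescribed trace (surjectivity of the norm map), and distinct eigenvalues give diagonalizability, hence conjugacy to $A^2$ via Lemma~\ref{conjugacy}.

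The gap sits exactly where you say you expect it: when $A$ has a repeated eigenvalue you never prove that your trace equation has a solution. ``Take $B$ to be a regular unipotent in general position \dots\ solving the resulting equation \dots\ completes the proof'' restates the difficulty rather than resolving it, and nothing in your setup guarantees solvability uniformly in~$r$. (Your explanation of why a single root group fails is also too hasty: $B^{-1}A^2$ is triangular in the eigenbasis of $A^2$ only when the root group's fixed flag is adapted to that basis; for the other root groups the trace genuinely varies --- the problem is not that it is constant, but that you have not computed it.) The paper closes this case with a characteristic-two trick that avoids solving any equation for $A^2$ at all. With the form $x_0\gal{y_2}+x_1\gal{y_1}+x_2\gal{y_0}$, take
$J=\left(\begin{smallmatrix}1&0&0\\0&1&0\\1&0&1\end{smallmatrix}\right)$
and
$F=F_{u,v}=\left(\begin{smallmatrix}1&u&v\\0&1&\gal{u}\\0&0&1\end{smallmatrix}\right)$,
both of order dividing $4$, and arrange for $JF$ to have the characteristic polynomial of $A$ \emph{itself}: its trace is $v+1$, so put $v=t_A+1$ and solve the norm condition $v+\gal{v}=u\gal{u}$ for $u$. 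If $JF$ is diagonalizable, it is conjugate to $A$ in $\SU3{\FF_{r^2}|\FF_r}$ and one squares. If it is not, its Jordan form contains a block $\left(\begin{smallmatrix}\lambda&0\\1&\lambda\end{smallmatrix}\right)$, and in characteristic two the square of that block is the scalar $\lambda^2\id$; hence $(JF)^2$ is diagonalizable with the same eigenvalues as $A^2$, so it is conjugate to $A^2$ by Lemma~\ref{conjugacy}, and $(JF)^2=(JFJ^{-1})F$ is visibly a product of two elements of order dividing $4$. This squaring step is precisely why the lemma is stated for $A^2$ rather than for $A$; your plan, which targets $A^2$ directly, has no substitute for it.
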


\begin{proof}
  We use coordinates such that the hermitian form is described by
  $x_0\gal{y_2}+x_1\gal{y_1}+x_2\gal{y_0}$. 
  The element %
  \(%
  J \coloneqq %
  \left(%
    \begin{smallmatrix}
      1 & 0 & 0 \\
      0 & 1 & 0 \\
      1 & 0 & 1
    \end{smallmatrix} %
  \right) \in \SU3{\FF_{r^2}|\FF_r} %
  \)   %
  is an involution, and %
  \(%
  F_{u,v} \coloneqq  %
  \left(%
    \begin{smallmatrix}
        1 & u & v \\
        0 & 1 & \gal{u}\\
        0 & 0 & 1
    \end{smallmatrix} %
  \right) %
  \) %
  belongs to $\SU3{\FF_{r^2}|\FF_r}$ if $v+\gal{v} = u\gal{u}$. Note
  also that $F_{u,v}^4=\id$, and~$F_{u,v}^2=\id$ holds if $u=0$ (then
  $v\in\FF_{r}$). %
  The product
  $
    JF_{u,v} =  %
    \left(
      \begin{smallmatrix}
        1 & u & v \\
        0 & 1 & \gal{u}\\
        1 & u & v+1
      \end{smallmatrix}
    \right)
  $
  has trace $v+1$, and its characteristic polynomial is
  $X^3+(v+1)X^2+(\gal{v}+1)X+1$.

  Let $t_A$ be the trace of the given matrix $A$ and put $v\coloneqq t_A+1$.
  The norm map
  $N\colon \FF_{r^2}\to\FF_r\colon x\mapsto x\gal{x} = x^{r+1}$ is
  surjective, hence we find $u\in\FF_{r^2}$ such that
  $u\gal{u} = v+\gal{v}$.
  We abbreviate $F \coloneqq F_{u,v}$ and infer from \ref{conjugacy}
  that $JF$ and the diagonalizable matrix $A$ have the same
  characteristic polynomial, hence also the same set of eigenvalues.
  If $JF$ is diagonalizable, then $JF$ has the same order as $A$, and
  \ref{conjugacy} implies that $A$ is conjugate to $JF$ in
  $\SU3{\FF_{r^2}|\FF_r}$. %
  Now $A^2$ is conjugate to $(JF)^2 = JFJF = (JFJ^{-1})F$. 
  
  It remains to consider the case where $JF$ is not diagonalizable. Then
  the characteristic polynomial has a root $\lambda$ with multiplicity $2$
  (not $3$ since $A$ is not central), and $A$ is conjugate to the
  diagonal matrix $\operatorname{diag}(\lambda, \lambda, \lambda^{-2})$
  where $N(\lambda) = \lambda^{r+1} = 1 \ne \lambda ^3$.
  Thus $JF$ is similar %
  (i.e.\ conjugate in $\GL 3 {\FF_{r^2}}$) to its Jordan normal form
   \[
    \left(
      \begin{matrix}
        \lambda & 0 & 0 \\
        1 & \lambda & 0 \\
        0 & 0 & \lambda^{-2}
      \end{matrix}
    \right),  %
  \]
  hence $(JF)^2$ is similar to
  $\operatorname{diag}(\lambda^2, \lambda^2, \lambda^{-4})$ which is
  similar to $A^2$.  The matrix $(JF)^2= JFJF= (JFJ^{-1})F$ is
  conjugate to $A^2$ in $\SU3{\FF_{r^2}|\FF_r}$ by \ref{conjugacy}.
\end{proof}
 
\begin{rema}\label{nonCentralNeeded}
  The assumption that $A$ is not central is needed
  in~\ref{productOfRootElements}. %
  Indeed, for any field~$F$ of characteristic two, non-trivial central
  elements of $\GL{n}{F}$ are never products of two elements in Sylow
  $2$-subgroups. In fact, a non-trivial central element is of the form
  $u\,\id$ with $u\in F$. The elements of Sylow $2$-subgroups are
  unipotent (i.e.\ they have~$1$ as their only characteristic
  root). If the product of unipotent elements $S,T$ equals~$u\,\id$
  then $S = uT^{-1}$ is a unipotent element with characteristic
  root~$u$, so $u=1$ and the product is trivial, indeed.
\end{rema}

\begin{theo}\label{excludeSU}
  The little projective group~$\lpg$ is not isomorphic to
  $\PSU3{\FF_{r^2}|\FF_r}$, for any~$r$.
\end{theo}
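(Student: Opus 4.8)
The plan is to show that if $\lpg\cong\PSU3{\FF_{r^2}|\FF_r}$ then $\gp$ would be forced to act too freely on the affine points $U\setminus B_\infty$ for a group of this order. First I would fix the setup: comparing degrees in \ref{finiteMoufang} gives $q+1=r^3+1$, so $q=r^3$ and $|U\setminus B_\infty|=q^3+1-(q+1)=r^3(r^6-1)$. For $r=2$ the group $\PSU3{\FF_4|\FF_2}$ is sharply two-transitive, so \ref{sharplyTwoTrs} applies and forces $q\le 3$, contradicting $q=8$; hence $r\ge 3$ and $\lpg$ is simple. By \ref{PropSimple} the group $\gp$ is then a perfect central extension of $\lpg$ whose center $Z=\gp_{[B_\infty]}$ is a quotient of the Schur multiplier of $\PSU3{\FF_{r^2}|\FF_r}$, which is cyclic of order $\gcd(3,r+1)$. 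Thus $\gp\cong\SU3{\FF_{r^2}|\FF_r}$ or $\gp\cong\PSU3{\FF_{r^2}|\FF_r}$, and under this isomorphism each translation group $\tg{x}$ with $x\in B_\infty$ becomes one of the root (unipotent) subgroups.

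Two geometric facts drive the argument. The first is elementary: if $\tau_1\in\tg{x_1}$ and $\tau_2\in\tg{x_2}$ are non-trivial with $x_1\ne x_2$ on $B_\infty$, then $\tau_1\tau_2$ fixes no point of $U\setminus B_\infty$. Indeed, if it fixed an affine point $P$ then the point $\tau_2(P)=\tau_1^{-1}(P)$ would lie on both blocks $Px_1$ and $Px_2$; these are distinct (otherwise $x_1,x_2,P$ are collinear, forcing $P\in B_\infty$), so they meet only in $P$, whence $\tau_2(P)=P$, which is impossible for a non-trivial translation (by the regular action of $\tg{x_2}$ on the blocks through its center, see \cite[1.3]{MR3090721}). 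The second fact is \ref{applyRigidity}: the center $Z$ acts semi-regularly on $U\setminus B_\infty$. From these I would deduce the key claim: every non-central element $g\in\gp$ whose order divides $r+1$ fixes no point of $U\setminus B_\infty$. Such a $g$ is diagonalizable by \ref{conjugacy}(a), so by \ref{conjugacy}(c) it is conjugate to a diagonal torus element $A$ of the same trace with $A^{r+1}=1$. For even $r$ the order $r+1$ is odd and \ref{productOfRootElements} expresses $A^2$ as a product of two root elements with distinct centers, so $A^2$—hence $A$ and $g$—fixes no affine point; for odd $r$ I would instead realize a conjugate of $g$ itself as such a product. Central elements fix no affine point by \ref{applyRigidity}.

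Finally I would produce an element contradicting the key claim by a Sylow count. Writing $|\gp|\in\{|\SU3{\FF_{r^2}|\FF_r}|,|\PSU3{\FF_{r^2}|\FF_r}|\}$ and factoring, one finds that for a prime $\ell\mid r+1$ the $\ell$-adic valuations satisfy $v_\ell(|\gp|)-v_\ell(|U\setminus B_\infty|)=v_\ell(r+1)>0$ (up to the factor $\gcd(3,r+1)$, relevant only for $\ell=3$), because $r^2+r+1\equiv 1\pmod\ell$. I would choose $\ell=2$ when $r$ is odd, and an odd prime $\ell$ (avoiding $\ell=3$ when $\gp\cong\PSU3{\FF_{r^2}|\FF_r}$ and $3\,\|\,r+1$, which is possible for $r\ge 3$) when $r$ is even. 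Then a Sylow $\ell$-subgroup of $\gp$, whose order exceeds the $\ell$-part of $|U\setminus B_\infty|$, cannot act semi-regularly on $U\setminus B_\infty$, so some non-trivial $\ell$-element fixes an affine point. For odd $\ell$ this element lies in a torus of exponent dividing $r+1$, so its order divides $r+1$ and the key claim applies; for $\ell=2$ the involution it powers to fixes the same affine point, and the order-two case of the key claim applies. Either way we reach a contradiction, so $\lpg\not\cong\PSU3{\FF_{r^2}|\FF_r}$.

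I expect the main obstacle to be the key claim in odd characteristic: \ref{productOfRootElements} is available only for $r=2^e$, so for odd $r$ one must construct by hand a product of two root elements from distinct centers lying in the prescribed $\SU_3$-conjugacy class (in particular for involutions), relying on the trace criterion \ref{conjugacy}(c) together with a diagonalizability check. The remaining bookkeeping—that a prime $\ell\mid r+1$ with $v_\ell(|\gp|)>v_\ell(|U\setminus B_\infty|)$ always exists for $r\ge 3$, and that the relevant Sylow $\ell$-subgroup has the claimed exponent—is routine once the valuation identity above is in place.
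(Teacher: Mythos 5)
Your proposal is correct and follows essentially the same route as the paper's proof: reduce to $\gp\cong\SU3{\FF_{r^2}|\FF_r}$ or $\PSU3{\FF_{r^2}|\FF_r}$ (the paper lifts the action to the universal cover instead, which amounts to the same thing), compare the $\ell$-parts of $|\gp|$ and $|U\setminus B_\infty|$ for a prime $\ell$ dividing $r+1$ to force a non-trivial element of order dividing $r+1$ that fixes an affine point, and contradict this by writing that element (or its square) as a product of two translations with distinct centers, via \ref{productOfRootElements} for even $r$ and an explicit involution computation for odd $r$. The step you flag as the main obstacle --- realizing an involution of $\SU3{\FF_{r^2}|\FF_r}$, $r$ odd, as a product of two root elements from distinct centers, certified by the trace criterion of \ref{conjugacy} --- is exactly what the paper supplies by exhibiting two explicit root-group matrices whose product has trace $-1$ and order two, so your plan does close; a minor dividend of your version is that allowing $\ell=3$ whenever $9\mid r+1$ keeps the valuation gap positive even after the central quotient, which bypasses the paper's appeal to Levi ben Gerson's result that $2^d+1=3^m$ forces $r\in\{2,8\}$.
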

\begin{proof}
  If~$\lpg$ is isomorphic to $\PSU3{\FF_{r^2}|\FF_r}$ then the
  translation groups are the root subgroups, i.e.\ the (Sylow)
  subgroups of order~$r^3$ in $\PSU3{\FF_{r^2}|\FF_r}$. In particular,
  we have $q=r^3$.
  For $r=2$ we have $q=8$, and~$\lpg$ is (isomorphic to) the sharply
  two-transitive group
  $\PSU3{\FF_{4}|\FF_2} \cong \Q8 \ltimes \FF_3^2$; this is excluded
  by~\ref{sharplyTwoTrs}.

  From now on, let $r>2$. The group $\PSU3{\FF_{r^2}|\FF_r}$ is
  perfect, and~$\gp$ is a perfect central extension
  of~$\PSU3{\FF_{r^2}|\FF_r}$, see \ref{PropSimple} or \cite[3.1]{MR3090721}.
   For the case
  at hand, we know that $\SU3{\FF_{r^2}|\FF_r}$ is the universal cover
  of $\PSU3{\FF_{r^2}|\FF_r}$, see~\cite[Thm.\,2]{MR0338148}. %
  So we assume that $\SU3{\FF_{r^2}|\FF_r}$ acts (not necessarily
  faithfully) on the unital~$\UU$ such that the root subgroups induce transitive
  groups of translations with center on~$B_\infty$. %

  Assume first that $r$ is odd, and let $2^a$ be the highest power
  of~$2$ dividing $|U\setminus B_\infty|  = (r^3+1)r^3(r^3-1)$. Then
  $2^{a}$ divides $(r^3+1)(r-1)$ and $2^{a+1}$ divides
  $(r^3+1)r^3(r-1)(r+1) = |\SU3{\FF_{r^2}|\FF_r}|$. So some
  point in $ U\setminus B_\infty$ is fixed by some involution~$\gamma
  \in \SU3{\FF_{r^2}|\FF_r}$.
  We use coordinates such that the hermitian form defining
  $\SU3{\FF_{r^2}|\FF_{r^2}}$ is given by
  $x_0\gal{y_2} + x_1\gal{y_1} + x_2\gal{y_0}$.
  Then the matrices $ \left(
    \begin{smallmatrix}
      1 & 0 & 0 \\
      1 & 1 & 0 \\
      -1/2 & -1 & 1 
    \end{smallmatrix}\right)$
  and $  \left(
    \begin{smallmatrix}
      1 & -4 & -8 \\
      0 & 1 & 4 \\
      0 & 0 & 1
    \end{smallmatrix}\right)$ belong to root groups of
  $\SU3{\FF_{r^2}|\FF_r}$, and their
  product $\left(
    \begin{smallmatrix}
      1 & -4 & -8 \\
      1 & -3 & -4 \\
      -1/2 & 1 & 1 
    \end{smallmatrix}\right)$ is an involution (and represents an
  involution in $\PSU3{\FF_{r^2}|\FF_r}$).
  All involutions in $\SU3{\FF_{r^2}|\FF_r}$ are conjugate by~\ref{conjugacy},
  hence $\gamma$ is a product of two root elements
  and does not
  fix any point outside $B_\infty$; this is a contradiction.

  Therefore $r$ is even. %
  Let~$p$ be a prime dividing $r+1$, and let $m$ be
  the largest integer such that~$p^m$ divides $r+1$. Then $p$ is odd
  (because $r$ is even), and $p^{2m}$ divides
  $(r^3+1)r^3(r^2-1) = |\SU3{\FF_{r^2}|\FF_r}|$.

  If $p>3$ then $p$ does not divide $r^2-r+1$, and $p^{m+1}$ does not
  divide $|U\setminus B_\infty| = ({r^3+1})r^3({r^2+r+1})({r-1})$. So
  there exists at least one orbit whose length is not divisible
  by~$p^{m+1}$, and there exists an element~$\gamma$ of order~$p$ in
  the stabilizer of some point not in~$B_\infty$. If~$\gamma$ is not
  central in $\SU{3}{\FF_{r^2}|\FF_r}$ then $\gamma^2$
  is a product of two root elements (see~\ref{productOfRootElements})
  and does not fix any point outside~$B_\infty$.
  So~$\gamma$ is a central element of order~$p>3$ in
  $\SU{3}{\FF_{r^2}|\FF_r}$, contradicting the fact 
  that the center of $\SU{3}{\FF_{r^2}|\FF_r}$ has order~$3$ or is
  trivial. 

  There remains the case where $p=3$ is the only prime divisor
  of~$r+1$. Then $r+1 = 2^d + 1 = 3^m$ for positive integers $d$
  and~$m$. We infer that $r=2^d \in\{2,8\}$, see e.g.\ \cite[Lemma
  19.3]{MR0237627}; this is an old result of Levi ben Gerson from
  1343, %
  see~\cite[\S\,4, pp.\,169\,ff]{Chemla-Pahaut}.  Since $r>2$ we have
  $r=8$ and $m=2$.  Then $3^{3}=3^{m+1}$ is the highest power of~$3$
  dividing
  $|U\setminus B_\infty| = ({r^3+1})r^3({r^2+r+1})({r-1}) =
  2^9\cdot3^3\cdot7\cdot19\cdot73$ but $3^5=3^{2m+1}$ divides
  $|\SU{3}{\FF_{64}|\FF_8}| = (r^3+1)r^3(r^2-1) =
  2^9\cdot3^5\cdot7\cdot19$.  %
  We now find an element $\gamma$ of order~$3$ in the stabilizer of a
  point not in~$B_\infty$.  %
  If~$\gamma$ is not central in $\SU{3}{\FF_{64}|\FF_8}$ then
  $\gamma =\gamma^{-2}$ is a product of two root elements
  (see~\ref{productOfRootElements}) and does not fix any point
  outside~$B_\infty$. So~$\gamma$ is a central element of order~$3$ in
  $\SU{3}{\FF_{64}|\FF_8}$ and fixes every point in~$\UU$,
  see~\ref{applyRigidity}. %
  This means that $\SU{3}{\FF_{64}|\FF_8}$ induces on~$\UU$ a group
  isomorphic to $\PSU{3}{\FF_{64}|\FF_8}$, of order
  $(r^3+1)r^3(r^2-1)/3 = (8^3+1)8^3(8^2-1)/3 =
  2^9\cdot3^4\cdot7\cdot19$. Since $3^4$ does not divide
  $|U\setminus B_\infty|$ we still find an element of order~$3$ in the
  stabilizer of a point not on $B_\infty$, and reach a contradiction
  using \ref{productOfRootElements} again.
\end{proof}

\section{Suzuki groups and Ree groups}

\begin{theo}\label{SuzukiSemiReg}
  If\/ $\lpg$ is a Suzuki group then $q\ge 2^6$ and $\gp = \lpg$, and~$\gp$
  acts semi-regularly on $U\setminus B_\infty$.
\end{theo}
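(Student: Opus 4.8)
The plan is to prove the three assertions in turn: the bound $q\ge 2^6$, the equality $\gp=\lpg$, and semi-regularity. The bound is immediate from the classification of finite Moufang sets in~\ref{finiteMoufang}: a Suzuki little projective group is $\Sz{2^s}={}^2B_2(2^s)$ with $q=2^{2s}\ge 2^6$. For $\gp=\lpg$, note that by~\ref{PropSimple} the group $\gp$ is a perfect central extension of the simple group $\lpg\cong\Sz{2^s}$, and $Z:=\gp_{[B_\infty]}$ is a quotient of the Schur multiplier. For $2^s>8$ this multiplier is trivial and $Z=1$. The one delicate case is $\Sz 8$ (i.e.\ $q=2^6$), whose multiplier is $C_2\times C_2$. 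Here I would argue by transfer: each translation group $\tg z$ ($z\in B_\infty$) has order $q=2^{2s}$ and maps isomorphically onto a root group, i.e.\ onto a Sylow $2$-subgroup of $\lpg$; hence $\tg z\cap Z=1$ and $P:=\tg z Z=\tg z\times Z$ is a Sylow $2$-subgroup of $\gp$ with $Z$ as a \emph{central direct factor}. The transfer into $Z$ sends $\zeta\mapsto\zeta^{[\gp:P]}$ for $\zeta\in Z$, an automorphism of $Z$ since $[\gp:P]$ is odd; it is therefore onto, exhibiting $Z$ as a nontrivial abelian quotient of $\gp$ and contradicting $\gp=\gp'$. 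Thus $Z=1$ and $\gp=\lpg\cong\Sz{2^s}$.

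For semi-regularity I would show $\gp_x=1$ for every $x\in U\setminus B_\infty$. Every nontrivial element of $\Sz{2^s}$ is either a $2$-element or has odd order lying in one of the cyclic maximal tori, of the pairwise coprime orders $2^s-1$ and $2^s\pm 2^{(s+1)/2}+1$. A nontrivial $2$-element lies in some Sylow $2$-subgroup, i.e.\ in some $\tg z$; but a nontrivial translation with center $z$ acts regularly on $B\setminus\{z\}$ for each block $B\ni z$, so its only fixed point is $z$, and it fixes nothing off $B_\infty$. Hence $\gp_x$ has odd order.

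Now suppose $1\ne g\in\gp_x$, lying in a cyclic maximal torus $T$. The normalizer $N_\gp(T)$ contains an involution $\rho$ (the reflection of the Weyl group $C_2$ when $|T|=2^s-1$, and the order-$2$ element of the Weyl group $C_4$ in the two remaining cases). Since $T$ is cyclic, $\rho$ normalizes the characteristic subgroup $\langle g\rangle$ and thus permutes $\mathrm{Fix}(g)=\mathrm{Fix}(\langle g\rangle)$. Being an involution, $\rho$ is a $2$-element, hence lies in some $\tg{z_\rho}$ and fixes exactly one point $z_\rho\in B_\infty$ and none off $B_\infty$; moreover $z_\rho\notin\mathrm{Fix}_{B_\infty}(g)$ (for $|T|=2^s-1$ the two $g$-fixed points $\infty,o$ are interchanged by $\rho$, since otherwise $\rho\in\gp_{\infty,o}=T$ has odd order; for the other tori $g$ fixes no point of $B_\infty$ at all). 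As $\rho(x)\ne x$ while both $x$ and $\rho(x)$ lie in $\mathrm{Fix}(g)$, the block $D:=x\,\rho(x)$ is $g$-invariant; and $\rho$ swaps $x,\rho(x)$, so $D$ is $\rho$-invariant. Because $|D|=q+1=2^{2s}+1$ is odd, the involution $\rho|_D$ has a fixed point, necessarily $z_\rho$; hence $z_\rho\in D\cap B_\infty$, and $D\cap B_\infty=\{z_\rho\}$ since $D\ne B_\infty$. But $g$ fixes $D$ and preserves $B_\infty$, so $g$ fixes $z_\rho$, contradicting $z_\rho\notin\mathrm{Fix}_{B_\infty}(g)$. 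Therefore $\gp_x=1$ and $\gp$ is semi-regular on $U\setminus B_\infty$.

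The main obstacle is the odd-order part of semi-regularity. The divisibility $|U\setminus B_\infty|=(2^s+1)\,|\gp|$ and the fixed-point congruences modulo the relevant primes are all compatible with a nontrivial stabilizer, so counting alone does not suffice; the decisive input is the block $D=x\,\rho(x)$ produced from an $\langle g\rangle$-normalizing involution $\rho$ in the torus normalizer, whose unique point on $B_\infty$ is then forced to be $g$-fixed. Carrying this out rigorously requires the standard structure theory of $\Sz{2^s}$ that I have invoked: that the three classes of maximal tori are cyclic of the stated coprime orders, and that each torus normalizer contains a genuine involution (whether as the rank-one Weyl reflection or as the square of an order-$4$ Weyl element). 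I expect verifying these structural facts, rather than the geometric argument itself, to be the part demanding the most care.
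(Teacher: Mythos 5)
Your proposal is correct in substance, and both of its main steps take routes genuinely different from the paper's; one small point at the start needs repair. The bound $q\ge 2^6$ is \emph{not} immediate from \ref{finiteMoufang} alone: that classification lists Suzuki groups with $q\ge 2^6$ only because the smallest Suzuki group $\Sz{2}\cong\AGL1{\FF_5}$ is filed under the sharply two-transitive heading. So the hypothesis ``$\lpg$ is a Suzuki group'' still allows $\lpg\cong\Sz{2}$ with $q=4$, and you must exclude this case via \ref{sharplyTwoTrs} (which forces $q\le 3$, contradicting $q=4$), exactly as the paper does. This exclusion also matters downstream: your appeal to \ref{PropSimple} requires $\lpg$ to be simple, i.e.\ $s\ge 3$.

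Beyond that, the two arguments diverge from the paper at both substantial steps, and both of your versions check out. For $\gp=\lpg$, the paper treats the one delicate case $\lpg\cong\Sz{8}$ geometrically: a central involution $\zeta$ would fix, for each affine point $x$, the block joining $x$ and $x^\zeta$; parity forces that block to meet $B_\infty$ in a point $z$, and comparison with the translation $\tau\in\tg{z}$ satisfying $x^\tau=x^\zeta$ eventually shows $\zeta\in\tg{z}$, which is absurd. Your transfer argument is a valid and cleaner replacement: since $\tg{\infty}$ acts regularly on $B_\infty\setminus\{\infty\}$ it meets $Z$ trivially, so $P=\tg{\infty}\times Z$ is a Sylow $2$-subgroup of $\gp$; the transfer $\gp\to P/P'$ followed by projection onto the direct factor $Z$ restricts on the central subgroup $Z$ to $\zeta\mapsto\zeta^{[\gp:P]}$, which is surjective because $[\gp:P]$ is odd, so $Z$ is an abelian quotient of the perfect group $\gp$ and hence trivial. (The evaluation of the transfer on central elements is standard, and you only need that $Z$ is a $2$-group, which \ref{PropSimple} plus the Alperin--Gorenstein multiplier computation gives.) For semi-regularity, the paper quotes strong reality in $\Sz{2^s}$ --- every element not of order $4$ is a product of two involutions --- so every nontrivial element is a translation or a product of two translations and fixes no affine point. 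You instead use Suzuki's torus structure: an odd-order $g$ fixing $x\notin B_\infty$ lies in a cyclic maximal torus whose normalizer contains an involution $\rho$ normalizing $\langle g\rangle$; the block $D$ joining $x$ and $x^\rho$ is then $g$- and $\rho$-invariant, the parity of $|D|=q+1$ places the unique fixed point $z_\rho$ of $\rho$ on $D$, so $g$ fixes $z_\rho=D\cap B_\infty$, contradicting $z_\rho\notin\mathrm{Fix}_{B_\infty}(g)$. I verified your case analysis --- in the split case $\rho$ must swap the two fixed points of $g$ because the two-point stabilizer is the odd-order torus itself (and the Zassenhaus property gives that $g$ fixes exactly two points of $B_\infty$), while for the nonsplit tori $g$ fixes no point of $B_\infty$ by coprimality with the point-stabilizer order. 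Your route trades L\"uneburg's strong-reality theorem for the structure of the maximal tori and their normalizers plus a block-parity argument very much in the spirit of the paper's own $\Sz{8}$ and sharply two-transitive proofs; the paper's route is shorter given its citation, while yours relies only on Suzuki's original structure theory.
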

\begin{proof}
  We have $\lpg = \Sz{2^s}$ for some odd integer $s\ge 1$, and
  the unital has order $q = 2^{2s}$. %
  The smallest Suzuki group $\Sz{2} \cong \AGL1{\FF_5}$ is sharply
  two-transitive, and excluded by~\ref{sharplyTwoTrs}. %
  
  The Schur multiplier of~$\Sz{2^3}$ is elementary abelian of
  order~$4$, see~\cite{MR0193141}, cf.~\cite[4.2.4]{MR2562037} and
  \cite[7.4.2]{MR1200015}. %
  If~$\zeta$ is a central involution in~$\gp$ then $\zeta$ acts
  trivially on $B_\infty$, and joining any point~$x$ with~$x^\zeta$
  gives a block~$B$ fixed by~$\zeta$. If that block does not
  meet~$B_\infty$ then~$\zeta$ fixes at least one of the $q+1 = 65$
  points on~$B$. This contradicts~\ref{applyRigidity}. So~$B$ contains
  a point~$z$ of~$B_\infty$. Then there exists a translation~$\tau$
  with center~$z$ such that $x^\zeta = x^\tau$. The translations have
  order dividing $4$, hence $\tau\zeta$ is an element of order~$2$
  or~$4$ fixing~$x$. %
  If $\tau$ has order~$4$ then $(\tau\zeta)^2 = \tau^2$ is non-trivial
  translation fixing~$x$.  This is impossible, so~$\tau$ is an
  involution. %
  The automorphisms~$\zeta\tau$ and~$\tau$ induce the same action
  on~$B_\infty$. In particular, the involution~$\zeta\tau$ fixes no
  point on~$B_\infty$ apart from~$z$. %
  For each point $y\in U\setminus B_\infty$, the block joining~$y$
  and~$y^{\tau\zeta}$ is fixed by~$\tau\zeta$, and meets~$B_\infty$ in
  a fixed point of~$\tau\zeta$; that point has to be~$z$. This means
  that $\tau\zeta$ fixes every block through~$z$, and is a translation
  with center~$z$. Now $\zeta = \tau(\tau\zeta) \in \tg{z}$ is a
  translation fixing every point on~$B_\infty$. This contradicts the
  fact that a non-trivial translation fixes only one point. So
  $\gp = \lpg$ holds if~$\lpg=\Sz{2^3}$.
  
  If $\lpg = \Sz{2^s}$ with $s>3$ then $\gp = \lpg$ because the
  Schur multiplier is trivial; see~\cite{MR0193141},
  cf.~\cite[4.2.4]{MR2562037} and \cite[7.4.2]{MR1200015}. %
  Thus we have $\gp=\lpg = \Sz{2^s}$ for $s\ge 3$.
  Consequently, each element of order~$2$ or~$4$ in~$\gp$ is a
  translation.
  Apart from the elements of order~$4$, every element in~$\Sz{2^s}$ is
  strongly real, i.e.\ a product of two involutions; see e.g.~\cite[24.7,
  24.6]{MR572791}. In particular, every non-trivial element is the product of two
  translations (viz., elements of order dividing~$4$), and does not fix
  any point in $U\setminus B_\infty$. So the action
  of~$\gp$ on $U\setminus B_\infty$ is semi-regular. %
\end{proof}

The following result is contained in \cite[2.6]{MR533094}; we give a 
more detailed proof.

\begin{lemm}\label{productsRee}
  In the Ree group $\Ree{r}$ with $r=3^{2e+1}\ge 3$, every element of prime
  order is the product of two elements with orders dividing $9$.
\end{lemm}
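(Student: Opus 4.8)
The plan is to go through the elements of prime order by the prime involved, using the maximal tori of $\Ree{r}$. Write $r=3^{2e+1}$ and $n=3^{e+1}$, so $3r=n^2$ and $|\Ree{r}|=r^3(r-1)(r+1)(r^2-r+1)$ with $r^2-r+1=(r-n+1)(r+n+1)$; the maximal tori are cyclic of orders $r-1$, $r+1$, $r-n+1$ and $r+n+1$. Hence a prime $p$ dividing $|\Ree{r}|$ is $3$, or divides one of $r\pm1$, or divides $r^2-r+1$. An element of order $3$ is unipotent and needs no work, since $g=g^2\cdot g^2$ already writes it as a product of two elements of order $3$. So I only treat semisimple elements of prime order $p\neq3$; as $r^2-r+1$ and $(r-1)(r+1)$ share no prime factor other than $3$, each such $p$ divides exactly one of $(r-1)(r+1)$ and $r^2-r+1$.

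For $p\mid r^2-r+1$ I would use the normalizer of the Coxeter torus containing $g$. Such $g$ lies in a cyclic maximal torus $T$ of order $r-n+1$ or $r+n+1$; this torus is self-centralizing, and $N_{\Ree{r}}(T)/T\cong C_6$ acts faithfully on $T$. As $3\nmid|T|$ (both $r\pm n+1\equiv1\pmod{3}$), Schur--Zassenhaus yields an element $w$ of order $3$ in $N_{\Ree{r}}(T)$ mapping onto the subgroup of order $3$ in $C_6$. Since $C_6$ embeds into $\operatorname{Aut}(T)$, the automorphism $\sigma$ given by conjugation by $w$ is a primitive cube root of unity, so $1+\sigma+\sigma^2=0$ on $T$. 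Therefore
\[
 (gw)^3=g\cdot g^{\sigma}\cdot g^{\sigma^2}\cdot w^3=g^{\,1+\sigma+\sigma^2}=1,
\]
so $gw$ has order dividing $3$ and $g=(gw)\cdot w^2$ is a product of two elements of order dividing $3$.

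For the remaining primes ($p=2$, or odd $p$ dividing $r-1$ or $r+1$) I would pass to an involution centralizer. In $\Ree{r}$ every involution $j$ has centralizer $\langle j\rangle\times L$ with $L\cong\PSL2{\FF_r}$, and all involutions are conjugate. For odd $p$ the even-order cyclic torus containing $g$ has a unique involution $t$; as $g$ commutes with $t$ and has odd order, $g$ lies in the factor $L\cong\PSL2{\FF_r}$ of $C(t)$. For $p=2$ I take $j$ to be an involution inside such an $L$. In every case $g$ is thus conjugate into a subgroup $L\cong\PSL2{\FF_r}$ whose elements of order $3$ are $3$-elements of $\Ree{r}$, and it remains to factor a semisimple element of $\PSL2{\FF_r}$ into two elements of order $3$. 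For this I use the identity
\[
 \begin{pmatrix}1&1\\0&1\end{pmatrix}\begin{pmatrix}1&0\\c&1\end{pmatrix}
 =\begin{pmatrix}1+c&1\\c&1\end{pmatrix}
\]
in $\SL2{\FF_r}$, whose trace $-1+c$ runs over $\FF_r\setminus\{-1\}$ as $c$ runs over $\FF_r^{\ast}$. A non-central semisimple element is regular, with eigenvalues $\zeta,\zeta^{-1}$, hence determined up to $\SL2{\FF_r}$-conjugacy by its trace $\zeta+\zeta^{-1}$; this trace is never $-1$, because $\zeta+\zeta^{-1}=-1$ would make $\zeta$ a primitive cube root of unity, which does not exist in characteristic $3$. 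Choosing $c$ so that $-1+c$ equals this trace displays $g$ (up to conjugacy, and modulo the centre in the involution case) as a product of two transvections, i.e.\ two elements of order $3$.

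The substance lies less in any single computation than in the structural facts it rests on: the orders of the maximal tori, the self-centralizing Coxeter tori with $N/T\cong C_6$, and the involution centralizers $\langle j\rangle\times\PSL2{\FF_r}$; for these I would cite Ward's determination of the conjugacy classes of $\Ree{r}$ and Ree's original paper. Granting them, the crux is the same characteristic-$3$ coincidence in both halves: the absence of a primitive cube root of unity in $\FF_r$ (so that the single trace $-1$ missed by the transvection product is exactly the trace no semisimple element has), matched by its presence in $\operatorname{Aut}(T)$ (so that $1+\sigma+\sigma^2$ annihilates the Coxeter tori). I expect the fussiest points to be pinning down which torus a given $g$ meets and verifying the self-centralizing property underlying the case $p\mid r^2-r+1$.
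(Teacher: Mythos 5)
Your overall strategy is parallel to the paper's (reduce primes dividing $r^2-1$ to an involution centralizer $\langle j\rangle\times\PSL2{\FF_r}$, and handle primes dividing $r^2-r+1$ by an order-$3$ element normalizing a torus), but there is a genuine gap in the torus case. You infer from ``$C_6$ embeds into $\Aut{T}$'' that the order-$3$ automorphism $\sigma$ is a primitive cube root of unity, hence $1+\sigma+\sigma^2=0$ on $T$. That inference is invalid: for a cyclic group of composite order, an order-$3$ automorphism (even one inside a faithfully acting $C_6$) satisfies $1+\sigma+\sigma^2=0$ only if it acts nontrivially on \emph{every} primary component. This matters here, because $r\pm n+1$ is in general composite: for $r=3^5=243$ one has $r-n+1=217=7\cdot 31$, and $\Aut{C_{217}}$ contains faithful copies of $C_6$ whose order-$3$ element is trivial modulo $7$ (let the generator act as $-1$ modulo $7$ and with order $6$ modulo $31$); for such a $\sigma$ and $g$ of order $7$ one gets $(gw)^3=g^{1+\sigma+\sigma^2}=g^3\ne 1$, and your factorization collapses. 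What you actually need is that the order-$3$ element of $N(T)$ acts \emph{fixed-point-freely} on $T$, equivalently that $C_{\Ree{r}}(t)=T$ for every $t\in T\setminus\{1\}$, i.e.\ that $N(T)=T\rtimes C_6$ is a Frobenius group. This is true, and it is in Ward \cite{MR197587}, but it is strictly stronger than the self-centralizing property $C_{\Ree{r}}(T)=T$ that you invoke; granted it, $\sigma-1$ is invertible modulo every prime power dividing $|T|$ (note $3\nmid|T|$), and your identity follows.

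It is instructive that the paper's proof sidesteps exactly this point by descending from $T$ to the prime level: since the Sylow $p$-subgroups are cyclic, all subgroups of order $p$ are conjugate, and Ward provides a Frobenius subgroup $C_p\rtimes C_3\le\AGL1{\FF_p}$; modulo a \emph{prime} $p$, any nontrivial cube root of unity $k$ automatically satisfies $1+k+k^2\equiv 0$, so the commutator argument needs no extra input. Your proof can be repaired either this way or by quoting Ward's stronger centralizer statement. The rest of your argument is correct, and differs pleasantly from the paper's: your single transvection factorization in $\SL2{\FF_r}$ (traces $-1+c$, where the one missing trace $-1$ is exactly the unipotent trace in characteristic $3$) covers involutions and the odd primes dividing $r^2-1$ uniformly, whereas the paper treats involutions separately inside a subgroup $\Ree{3}$ via a factorization in $\SL2{\FF_8}$ into an element of order $3$ times one of order $9$ (which is why the lemma only claims orders dividing $9$), and cites the literature for the $\PSL2{\FF_r}$ statement. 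One small correction: the maximal torus of order $r+1$ is not cyclic but $C_2\times C_{(r+1)/2}$ (the Sylow $2$-subgroups of $\Ree{r}$ are elementary abelian, so there are no elements of order $4$); this is harmless, since all you use is that an element of odd prime order dividing $r^2-1$ centralizes some involution.
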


\begin{proof}
  All involutions in $\Ree{r}$ are conjugate (also for $r=3$), so each
  of them is contained in a subgroup isomorphic to
  $\Ree3\cong\PgL2{\FF_8} \cong \SL2{\FF_8}\rtimes\Cg3$. The
  factorization $%
  \left(
    \begin{smallmatrix}
      1 & u+1 \\
      0 & 1 
    \end{smallmatrix}\right)
  =
  \left(
    \begin{smallmatrix}
      1 & 1 \\
      1 & 0 
    \end{smallmatrix}\right)
  \left(
    \begin{smallmatrix}
      0 & 1 \\
      1 & u 
    \end{smallmatrix}\right) %
  $ %
  in $\SL 2 {\FF_8}$, where $u\in \FF_8$ satisfies $u^3 + u +1 =0$,
  shows that every involution is the product of an element of order
  $3$ with an element of order $9$ (it is also the product of two
  elements of order $9$, see \cite[Case\,(6), p.\,429]{MR3090721}). 

  The root elements of $\Ree{r}$ have orders dividing $9$; thus it
  remains to consider elements with prime order $p> 3$.  We have
  \[
    |\Ree{r}| =  (r^3+1)r^3 (r -1) 
    =r^3 (r^2 -1) (r + \sqrt{3r} +1)(r - \sqrt{3r} +1),
  \]
  and $\Ree{r}$ contains subgroups isomorphic to $\PSL 2 {\FF_r}$,
  viz.\ subgroups of index $2$ in centralizers of involutions, see
  \cite[page\,62]{MR197587}.  If $p$ divides $r^2-1$, then
  $\PSL 2 {\FF_r}$ contains a Sylow $p$-subgroup of $\Ree{r}$, and
  every element of $\PSL 2 {\FF_r}$ is a product of two elements
  (transvections) of order $3$ by \cite[3.4]{MR3090721} or
  \cite[2.7]{MR0467511}.

It remains to consider primes $p>3$ that divide $r \pm \sqrt{3r} +1$;
this includes the prime divisor $7$ of $|\Ree 3|$. The corresponding
Sylow $p$-subgroups are cyclic, hence all subgroups of order $p$
are conjugate, and $\Ree{r}$ contains the
Frobenius group $C_p \rtimes C_3$ of order $3p$, see \cite[IV.3, page 83]{MR197587}. 
The inclusion $C_p \rtimes C_3 \le \AGL 1 {\FF_p}$ yields that
every element of order $p$ in $C_p \rtimes C_3$ is a
commutator, hence it is the product of two conjugate elements of order~$3$.
\end{proof}

\begin{theo}\label{ReeSemiReg}
  If\/~$\lpg$ is a Ree group then $\gp = \lpg$, and the action of\/~$\gp$
  on $U\setminus B_\infty$ is semi-regular. %
\end{theo}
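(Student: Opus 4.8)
The plan is to follow the template of the Suzuki case (Theorem~\ref{SuzukiSemiReg}): first identify the central extension, proving $\gp = \lpg$, and then read off semi-regularity from the factorisation provided by Lemma~\ref{productsRee}. Write $\lpg = \Ree{r}$ with $r = 3^{2e+1} \ge 3$, so that $q = r^3$; the translation groups $\tg{z}$ (for $z$ on $B_\infty$) then have order $q = r^3$ and are exactly the Sylow $3$-subgroups of $\lpg$, i.e.\ the root groups of the Moufang set. Since $\gp$ induces $\lpg$ on $B_\infty$ and $\lpg$ is transitive there, all the groups $\tg{z}$ are conjugate in $\gp$.

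For $r > 3$ the group $\Ree{r}$ is simple with trivial Schur multiplier, so Proposition~\ref{PropSimple} gives $\gp = \lpg$ at once. The substantial case is $r = 3$, where $\lpg = \Ree{3} \cong \PgL2{\FF_8}$ is not simple but only an extension of the simple group $\lpg' = \PSL2{\FF_8}$ by $\Cg3$, so that Proposition~\ref{PropSimple} is unavailable. Here I would replace Lemma~\ref{simpleMoufangSetCommutators} by a direct commutator computation. Writing $U_z$ for the image of $\tg{z}$ in $\lpg$ and $\lpg_z$ for the stabiliser of $z$, the two-point stabiliser $H \le \lpg_z$ has order $r-1 = 2$, and its involution lies in $\lpg'$ and inverts the cyclic group $U_z \cap \lpg' \cong \Cg9$; hence $[U_z, \lpg_z] = U_z \cap \lpg' \cong \Cg9$. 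As $\gp_z$ normalises $\tg{z}$, the commutator $[\tg{z}, \gp_z]$ lies in $\tg{z} \cap \gp'$ and maps onto $U_z \cap \lpg'$, so $\tg{z} \cap \gp' \cong \Cg9$ and the image of $\tg{z}$ in $\gp/\gp'$ has order $3$. Since all root groups are conjugate in $\gp$, the abelianisation $\gp/\gp'$ is generated by the image of a single $\tg{z}$ and is therefore $\cong \Cg3$, so the natural surjection $\gp/\gp' \to \lpg/\lpg' \cong \Cg3$ is an isomorphism; this forces $Z = \gp_{[B_\infty]} \le \gp'$. On the other hand $Z \cap \gp'$ is a quotient of the Schur multiplier of $\lpg$, which is trivial for $\Ree{3}$. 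Thus $Z = Z \cap \gp' = 1$ and $\gp = \lpg$.

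With $\gp = \lpg = \Ree{r}$ in hand, semi-regularity is the pay-off of Lemma~\ref{productsRee}. It suffices to show that no element of prime order fixes a point of $U \setminus B_\infty$: an element of $Z = 1$ is excluded, so suppose $g \ne 1$ of prime order fixes an affine point $x$. By Lemma~\ref{productsRee}, $g = \tau_1 \tau_2$ is a product of two elements of order dividing $9$; as every $3$-element of $\Ree{r}$ lies in a Sylow $3$-subgroup, i.e.\ in some root group $\tg{z_i}$, both factors $\tau_i$ are translations. Now I invoke the elementary fact that a product of two translations with centres on $B_\infty$ cannot fix an affine point unless it is trivial. Indeed, set $y := \tau_2 x = \tau_1^{-1} x$; if $y = x$ then each $\tau_i$ fixes an affine point and is trivial, while if $y \ne x$ then, since $\tau_2$ fixes the block joining $x$ and $z_2$ and $\tau_1^{-1}$ fixes the block joining $x$ and $z_1$, both $z_1$ and $z_2$ lie on the block joining $x$ and $y$; this block meets $B_\infty$ in a single point (as $x, y \notin B_\infty$), so $z_1 = z_2 =: w$, and then $\tau_1 \tau_2 \in \tg{w}$ fixes the off-centre point $x$ and is trivial. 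Either way $g = 1$, a contradiction, so $\gp$ is semi-regular on $U \setminus B_\infty$.

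The step I expect to be the main obstacle is the case $r = 3$. Because $\Ree{3}$ is not simple, Proposition~\ref{PropSimple} does not apply and one must exclude a non-trivial central $3$-group by hand; the commutator identity $[U_z, \lpg_z] = U_z \cap \lpg'$ is precisely what takes over the role of Lemma~\ref{simpleMoufangSetCommutators}. The delicate ingredient is the claim that the involution of $H$ inverts the cyclic section $U_z \cap \lpg' \cong \Cg9$ of the root group; I would verify this from the fact that $H$ normalises the cyclic torus $U_z \cap \lpg'$ of $\PSL2{\FF_8}$ and that involutions in the normaliser of such a torus act by inversion, since the reduction to $\gp/\gp' \cong \Cg3$ hinges on it.
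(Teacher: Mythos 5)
Your proof is correct, and over most of its length it coincides with the paper's: for $r>3$ you both invoke simplicity, the trivial Schur multiplier and Proposition~\ref{PropSimple}, and for semi-regularity you both combine Lemma~\ref{productsRee} with the fact that the Sylow $3$-subgroups are exactly the translation groups; your only addition there is to spell out the geometric fact (which the paper leaves implicit, as in Theorem~\ref{SuzukiSemiReg}) that a product of two translations with centers on $B_\infty$ fixing an affine point must be trivial. The genuine divergence is the critical case $r=3$. The paper works with the last term $D$ of the derived series of $\gp$: it is a perfect central extension of $\SL2{\FF_8}$, hence isomorphic to $\SL2{\FF_8}$ since that group has no proper cover; adjoining a suitable translation $\alpha$ of order $3$ gives $\gp=(\langle\alpha\rangle\ltimes D)\times\gp_{[B_\infty]}$, and since the Sylow $3$-subgroups of $\langle\alpha\rangle\ltimes D$ act faithfully on $B_\infty$ they are full translation groups, two of which generate $\gp$, forcing $\gp_{[B_\infty]}=1$. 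You instead compute the abelianization of $\gp$: the involution in the two-point stabilizer lies in $\lpg'=\PSL2{\FF_8}$ and inverts the non-split torus $U_z\cap\lpg'\cong\Cg9$ (correct: that torus is a Sylow $3$-subgroup of $\PSL2{\FF_8}$ with dihedral normalizer of order $18$), whence $[U_z,\lpg_z]=U_z\cap\lpg'$, $\tg{z}\cap\gp'\cong\Cg9$, $\gp/\gp'\cong\Cg3$, and therefore $Z=\gp_{[B_\infty]}\le\gp'$; the five-term exact sequence $H_2(\lpg)\to Z\to \gp/\gp'\to\lpg/\lpg'\to 0$ for the central extension then exhibits $Z=Z\cap\gp'$ as a quotient of the Schur multiplier of $\Ree3$. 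All of these steps check out, and your route has the merit of being self-contained group theory inside $\PgL2{\FF_8}$, whereas the paper leans on the analysis in its predecessor [Case~(6) of the earlier paper] to produce $\alpha$ and the direct-product decomposition.

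One assertion in your argument does need justification: that the Schur multiplier of $\Ree3\cong\PgL2{\FF_8}$ is trivial. This is true, but it is not covered by the references the paper uses (Alperin--Gorenstein treat the multipliers of the \emph{simple} Suzuki and Ree groups, and $\Ree3$ is not simple). It follows, for instance, from the Lyndon--Hochschild--Serre spectral sequence applied to $1\to\PSL2{\FF_8}\to\PgL2{\FF_8}\to\Cg3\to1$: the three relevant terms $H_2(\Cg3)$, $H_1(\Cg3,H_1(\PSL2{\FF_8}))$ and $H_0(\Cg3,H_2(\PSL2{\FF_8}))$ all vanish because $\Cg3$ is cyclic and $\PSL2{\FF_8}$ is perfect with trivial multiplier. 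Alternatively, having established $Z\le\gp'$, you could apply the five-term sequence to the central extension $1\to Z\to\gp'\to\PSL2{\FF_8}\to1$ instead, which needs only the triviality of $H_2(\PSL2{\FF_8})$ and yields $\gp'=\gp''\times Z$ with $\gp''\cong\SL2{\FF_8}$, after which the paper's closing argument (faithful Sylow $3$-subgroups are full translation groups) finishes the proof without any multiplier computation for the non-simple group.
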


\begin{proof} We have
  $\lpg =\Ree{r}$ with $r=3^{2e+1}\ge 3$, and the unital has order $q=r^3$.
  
  We first prove that $\gp = \lpg$ if $r=3$; then
  $\lpg = \Ree{3} \cong \PgL2{\FF_8}\cong \SL2{\FF_8} \rtimes C_3$.
  As in~\cite[Case\,(6), p.\,429]{MR3090721}, we note that the final
  term~$D$ of the commutator series of~$\gp$ is a cover of
  $\SL 2 {\FF_8}$, which has no proper cover
  (see~\cite[V.25.7]{MR0224703} or~\cite{MR0153677}), %
  hence $D \cong \SL 2 {\FF_8}$. %
  There exists a translation $\alpha \in \gp \setminus D$ of order $3$
  such that $\langle \alpha, D \rangle = \langle \alpha\rangle \ltimes D$
  induces $\lpg \cong G / \gp_{[B_\infty]}$ on $B_\infty$, as in
  \cite[Case\,(6), p.\,429]{MR3090721}.
  Hence $\gp$ is the direct product of $\langle \alpha\rangle \ltimes D$ with
  the center~$\gp_{[B_\infty]}$ of~$\gp$.
  Each Sylow $3$-subgroup of $\langle \alpha\rangle \ltimes D$ has order $3^3$
  and acts faithfully on $B_\infty$, hence it is a full translation group $\tg{z}$ for
  some $z\in B_\infty$. There exist at least two such Sylow $3$-subgroups
  (as $D$ is simple), and together they generate~$G$. Hence 
  $G = \langle \alpha\rangle \ltimes D$ and $\gp_{[B_\infty]}$ is trivial.
  
  For $r>3$, the Ree group $\Ree{r}$ is simple and has trivial Schur
  multiplier; see ~\cite{MR0193141}. So~$\gp = \lpg$ holds for
  every~$r\ge 3$, and the Sylow $3$-subgroups of $G$ are the translation groups.
  By \ref{productsRee} the stabilizer $G_c$ of a point $c \in U\setminus B_\infty$
  cannot contain any element of prime order, hence $G_c$ is trivial.
\end{proof}

\section{Proof of the Main Theorem}

Let\/ $\UU$ be a unital of order~$q$ with two points which are centers
of translation groups of order~$q$. %
Then the group~$\gp$ generated by these two translation groups induces
a Moufang set (as defined in Section~\ref{sec:unitals}) on the
block~$B_\infty$ containing the translation centers
(see~\cite[3.1]{MR3090721}, where our present group~$G$ is
called~$\hat{G}$). %
We have listed the possibilities for the little projective
group~$\lpg$ in~\ref{finiteMoufang}. %
The group~$\lpg$ cannot be a unitary group, see~\ref{excludeSU}. %
If~$\lpg$ is sharply two-transitive then $q\in\{2,3\}$ and~$\UU$ is
the hermitian unital of order~$q$, see~\ref{sharplyTwoTrs}; thus
$G\cong\SL2{\FF_q}$. %

Now assume that~$\lpg$ is isomorphic to $\PSL 2{\FF_q}$ but not
sharply two-transitive. Then $q>3$, the group~$\lpg$ is simple,
and~$\gp$ is a perfect central extension of~$\lpg$
by~\ref{PropSimple}. In most cases, the Schur multiplier
of~$\SL2{\FF_q}$ is trivial. In those cases, we have
$\gp \cong \SL2{\FF_q}$ or $\gp \cong \PSL2{\FF_q}$.  The
Schur multiplier of $\SL2{\FF_q}$ is not trivial only if
$q\in\{4,9\}$.  In these cases, the arguments in~\cite[p.\,428,
(2)]{MR3090721} show that $\gp\cong\SL2{\FF_4}$ if $q=4$ and
$\gp\cong\SL2{\FF_9}$ or $\gp\cong\PSL2{\FF_9}$ if $q=9$. %
By~\cite[3.5]{MR3090721}, the action of~$\gp$ on $U\setminus B_\infty$
is semi-regular (this also applies if $q\le3$).

If~$\lpg$ is a Suzuki or Ree group then $\gp=\lpg$, and the action on
$U\setminus B_\infty$ is semi-regular, see~\ref{SuzukiSemiReg}
and~\ref{ReeSemiReg}. The smallest Suzuki group
$\Sz{2} \cong \AGL1{\FF_5}$ is sharply two-transitive, and excluded
by~\ref{sharplyTwoTrs}. %

In each one of the cases discussed above, the order~$q$ of the unital
turns out to be a prime power (thanks to the restriction $q\in\{2,3\}$
in the sharply two-transitive case).

\section{Simplifications of a previous paper}

The present paper yields some simplifications of the classification 
of the unitals admitting all translations in \cite{MR3090721}, as we explain
now. The elimination of the sharply two-transitive groups in \ref{sharplyTwoTrs}
leaves only Moufang sets which are determined uniquely by the isomorphism
type of their root groups, see \cite[3.3]{MR3090721}. Thus the mapping
$g\colon \cL_c \to \mathbb N$ considered in \cite[page 430]{MR3090721}
is constant, and Proposition~4.2 in \cite{MR3090721} is not needed
anymore; %
the proof of that proposition depends on the classification of the finite
simple groups. By \ref{excludeSU} one can omit the consideration of
unitary groups.

The classification of the finite simple groups is still involved at
the very end of the proof in \cite[page 430]{MR3090721}, when we quote
a result of Kantor's which uses the classification of finite doubly
transitive groups.  If the order $q$ of the unital is a power of~$2$,
then the classification of finite simple groups can be avoided,
because the doubly transitive groups of degree $q^3+1$ are classified
in \cite[Theorem~2]{MR575516}; see also \cite{MR515456}.  %

\goodbreak


\begin{thebibliography}{10}
\providecommand{\href}[2]{#2}
\providecommand{\eprint}[1]{\href{http://arxiv.org/abs/#1}{#1}}

\providecommand{\url}[1]{\href{#1}{#1}}
\providecommand{\urlprefix}{}
\providecommand{\doi}[1]{\href{http://dx.doi.org/#1}{{\normalfont\ttfamily doi:#1}}}
\providecommand{\MR}[1]{\relax\ifhmode\unskip\space\fi \MRnumberextract#1 \,}
\def\MRnumberextract#1 #2\,{\MRhref{#1}{#2}}%
\providecommand{\MRhref}[2]{%
  \href{http://www.ams.org/mathscinet-getitem?mr=#1}{MR\,#1 #2}}
\providecommand{\ZBL}[1]{\relax\ifhmode\unskip\space\fi \ZBLhref{#1}}
\providecommand{\ZBLhref}[1]{%
  \href{http://zbmath.org/?q=an:#1}{Zbl #1}}

\providecommand{\ISBNhref}[1]{\href{%
   http://www.worldcat.org/search?q=ISBN+#1&qt=owc_search}{ISBN~{#1}}}
\providecommand{\ISBN}[1]{\relax\ifhmode\unskip\space\fi \ISBNhref{#1}}

\bibitem{MR0193141}
J.~L. Alperin and  D.~Gorenstein, \emph{The multiplicators of certain
  simple groups}, Proc. Amer. Math. Soc. \textbf{17} (1966), 515--519, \doi{10.2307/2035202}. \MR{0193141 (33 \#1362).}
  \ZBL{0151.02002}.

\bibitem{MR1777008}
M.~Aschbacher, \emph{Finite group theory}, Cambridge Studies in Advanced
  Mathematics ~10, Cambridge University Press, Cambridge, 2nd ed., 2000, {\ISBN {0-521-78145-0; 0-521-78675-4}}. \MR{1777008
  (2001c:20001).} \ZBL{0997.20001}.

\bibitem{Chemla-Pahaut}
K.~Chemla and  S.~Pahaut, \emph{Remarques sur les ouvrages
  math{\'e}matiques de {G}ersonide}, in G.~Freudenthal (ed),
  \emph{Studies on {G}ersonides: A Fourteenth-century Jewish
  Philosopher-Scientist}, E.J. Brill, Leiden -- New York -- K{\"o}ln, 1992.
  \urlprefix\url{https://books.google.de/books?id=DXSUpHqPiMgC&pg=PA149}.

\bibitem{MR2730410}
T.~De~Medts and  R.~M. Weiss, \emph{The norm of a {R}ee group}, Nagoya
  Math. J. \textbf{199} (2010), 15--41,
  \urlprefix\url{http://projecteuclid.org/getRecord?id=euclid.nmj/1284471569}.
  \MR{2730410.} \ZBL{05813562}.

\bibitem{MR0139651}
V.~Ennola, \emph{On the conjugacy classes of the finite unitary groups}, Ann.
  Acad. Sci. Fenn. Ser. A I No. \textbf{313} (1962), 13,
  \doi{10.5186/aasfm.1962.313}. \MR{0139651.} \ZBL{0105.02403}.

\bibitem{MR569209}
D.~Gorenstein, \emph{Finite groups}, Chelsea Publishing Co., New York,
2nd ed., 1980, {\ISBN {0-8284-0301-5}}. \MR{569209.}
\ZBL{0463.20012}.

\bibitem{MR0338148}
R.~L. Griess, Jr., \emph{Schur multipliers of finite simple groups of {L}ie
  type}, Trans. Amer. Math. Soc. \textbf{183} (1973), 355--421,
  \doi{10.2307/1996474}. \MR{0338148 (49 \#2914).}
  \ZBL{0297.20023}.

\bibitem{MR3667125}
T.~Grundh\"{o}fer and  C.~Hering, \emph{Finite nearfields classified,
  again}, J. Group Theory \textbf{20} (2017), 829--839,
  \doi{10.1515/jgth-2017-0004}. \MR{3667125.} \ZBL{06738440}.

\bibitem{MR3090721}
T.~Grundh{\"o}fer, M.~J. Stroppel, and  H.~{Van Maldeghem}, \emph{Unitals
  admitting all translations}, J. Combin. Des. \textbf{21} (2013), 
  419--431, \doi{10.1002/jcd.21329}. \MR{3090721.}
  \ZBL{1276.05021}.

\bibitem{MR3533345}
T.~Grundh{\"o}fer, M.~J. Stroppel, and  H.~{Van Maldeghem}, \emph{A
  non-classical unital of order four with many translations}, Discrete Math.
  \textbf{339} (2016), 2987--2993,
  \doi{10.1016/j.disc.2016.06.008}. \MR{3533345.} \ZBL{1357.51001}.

\bibitem{MR0467511}
C.~Hering, \emph{On projective planes of type {VI}}, in \emph{Colloquio
  {I}nternazionale sulle {T}eorie {C}ombinatorie ({R}ome, 1973), {T}omo {II}.},
  pp. 29--53, Accademia Nazionale dei Lincei, Rome,
  1976. \MR{0467511.} \ZBL{0355.50010}.

\bibitem{MR533094}
C.~Hering, \emph{A theorem on group spaces}, Hokkaido Math. J. \textbf{8}
  (1979), 115--120, 
  \doi{10.14492/hokmj/1381758412}. \MR{533094.} \ZBL{0417.20005}.

\bibitem{MR0301085}
C.~Hering, W.~M. Kantor, and  G.~M. Seitz, \emph{Finite groups with a
  split {$BN$}-pair of rank {$1$}. {I}}, J. Algebra \textbf{20} (1972),
  435--475, \doi{10.1016/0021-8693(72)90068-3}.
  \MR{0301085 (46 \#243).} \ZBL{0244.20003}.

\bibitem{MR575516}
D.~F. Holt, \emph{Transitive permutation groups in which an involution central
  in a {S}ylow {$2$}-subgroup fixes a unique point}, Proc. London Math. Soc.
  (3) \textbf{37} (1978),  165--192, 
  \doi{10.1112/plms/s3-37.1.165}. \MR{575516.} \ZBL{0382.20005}.

\bibitem{MR0224703}
B.~Huppert, \emph{Endliche {G}ruppen. {I}}, Grundlehren der {M}athematischen
  {W}issenschaften  134, Springer-Verlag, Berlin, 1967, {\ISBN
  {978-3540038252}}. \MR{0224703 (37 \#302).} \ZBL{0217.07201}.

\bibitem{MR662826}
B.~Huppert and  N.~Blackburn, \emph{Finite groups. {III}}, Grundlehren der
  {M}athematischen {W}issenschaften  243, Springer-Verlag, Berlin, 1982, {\ISBN
  {3-540-10633-2}}. \MR{662826 (84i:20001b).} \ZBL{0514.20002}.

\bibitem{MR1895347}
W.~M. Kantor, \emph{Note on span-symmetric generalized quadrangles}, Adv. Geom.
  \textbf{2} (2002), 197--200, 
  \doi{10.1515/advg.2002.006}. \MR{1895347 (2003e:51006).} \ZBL{0998.51003}.

\bibitem{MR1200015}
G.~Karpilovsky, \emph{The {S}chur multiplier}, London Mathematical Society
  Monographs. New Series ~2, The Clarendon Press Oxford University Press, New
  York, 1987, {\ISBN {0-19-853554-6}}. \MR{1200015 (93j:20002).}
  \ZBL{0619.20001}.

\bibitem{MR572791}
H.~L{\"u}neburg, \emph{Translation planes}, Springer-Verlag, Berlin, 1980,
  {\ISBN {3-540-09614-0}}. \MR{572791 (83h:51008).} \ZBL{0446.51003}.

\bibitem{Moehler2020}
V.~M{\"{o}}hler, \emph{SL(2,q)-Unitals}, Ph.D. thesis, Karlsruher Institut
  f{\"u}r Technologie (KIT), 2020, \doi{10.5445/IR/1000117988}.

\bibitem{MR3247775}
S.~Y. Orevkov, \emph{Products of conjugacy classes in finite unitary groups
  {$GU(3,q^2)$} and {$SU(3,q^2)$}}, Ann. Fac. Sci. Toulouse Math. (6)
  \textbf{22} (2013),  219--251,
  \doi{10.5802/afst.1371}. \MR{3247775.} \ZBL{1283.20059}.

\bibitem{MR0237627}
D.~Passman, \emph{Permutation groups}, W. A. Benjamin, Inc., New
  York-Amsterdam, 1968. \MR{0237627 (38 \#5908).} \ZBL{0179.04405}.

\bibitem{MR1063141}
T.~Peterfalvi, \emph{Sur les {$BN$}-paires scind\'ees de rang {$1$}, de degr\'e
  impair}, Comm. Algebra \textbf{18} (1990),  2281--2292,
\doi{10.1080/00927879008824021}. \MR{1063141 (91f:20004).} \ZBL{0794.20009}.

\bibitem{RizzoPhD}
P.~Rizzo, \emph{Unitals with at least two translation centers},
  Ph.D. thesis{}, Universit{\'a} del Salento, Lecce, 2020.

\bibitem{MR1357169}
D.~J.~S. Robinson, \emph{A course in the theory of groups}, Graduate Texts in
  Mathematics ~80, Springer-Verlag, New York, 2nd ed., 1996,
  {\ISBN {0-387-94461-3}}. \MR{1357169 (96f:20001).} \ZBL{0836.20001}.

\bibitem{MR0296150}
E.~Shult, \emph{On a class of doubly transitive groups}, Illinois J. Math.
  \textbf{16} (1972), 434--445,
  \urlprefix\url{http://projecteuclid.org/getRecord?id=euclid.ijm/1256065769}.
  \MR{0296150 (45 \#5211).} \ZBL{0241.20004}.

\bibitem{MR515456}
F.~Smith, \emph{On transitive permutation groups in which a {$2$}-central
  involution fixes a unique point}, Comm. Algebra \textbf{7} (1979),
 203--218, \doi{10.1080/00927877908822342}.
  \MR{515456.} \ZBL{0407.20001}.

\bibitem{MR0268192}
T.~A. Springer and  R.~Steinberg, \emph{Conjugacy classes}, in
  \emph{Seminar on {A}lgebraic {G}roups and {R}elated {F}inite {G}roups ({T}he
  {I}nstitute for {A}dvanced {S}tudy, {P}rinceton, {N}.{J}., 1968/69)}, Lecture
  Notes in Mathematics, Vol. 131, pp. 167--266, Springer, Berlin, 1970.
  \MR{0268192.} \ZBL{0249.20024}.

\bibitem{MR0153677}
R.~Steinberg, \emph{G\'en\'erateurs, relations et rev\^etements de groupes
  alg\'ebriques}, in \emph{Colloq. {T}h\'eorie des {G}roupes
  {A}lg\'ebriques ({B}ruxelles, 1962)}, pp. 113--127, Librairie
  Universitaire, Louvain, 1962. \MR{0153677 (27 \#3638).} \ZBL{0272.20036}.

\bibitem{MR0162840}
M.~Suzuki, \emph{On a class of doubly transitive groups. {II}}, Ann. of Math.
  (2) \textbf{79} (1964), 514--589, \doi{10.2307/1970408}.
  \MR{0162840 (29 \#144).} \ZBL{0123.25101}.

\bibitem{MR648772}
M.~Suzuki, \emph{Group theory. {I}}, Grundlehren der {M}athematischen
  {W}issenschaften  247, Springer-Verlag, Berlin-New York, 1982, {\ISBN
  {3-540-10915-3}}. \MR{648772 (82k:20001c).} \ZBL{0472.20001}.

\bibitem{MR1189139}
D.~E. Taylor, \emph{The geometry of the classical groups}, Sigma Series in Pure
  Mathematics ~9, Heldermann Verlag, Berlin, 1992, {\ISBN {3-88538-009-9}}.
  \MR{1189139 (94d:20028).} \ZBL{0767.20001}.

\bibitem{MR1957745}
K.~Thas, \emph{The classification of generalized quadrangles with two
  translation points}, Beitr\"{a}ge Algebra Geom. \textbf{43} (2002),
  365--398, 
  \urlprefix\url{https://www.emis.de/journals/BAG/vol.43/no.2/6.html}.
  \MR{1957745.} \ZBL{1018.51005}.

\bibitem{MR1895346}
K.~Thas, \emph{Classification of span-symmetric generalized quadrangles of
  order {$s$}}, Adv. Geom. \textbf{2} (2002), 189--196,
  \doi{10.1515/advg.2002.005}. \MR{1895346 (2003f:51015).}
  \ZBL{0998.51004}.

\bibitem{MR2245777}
K.~Thas, \emph{Determination of generalized quadrangles with distinct elation
  points}, J. Algebraic Combin. \textbf{24} (2006), 5--22,
  \doi{10.1007/s10801-006-9098-3}. \MR{2245777.}
  \ZBL{1108.51007}.

\bibitem{MR0150210}
G.~E. Wall, \emph{On the conjugacy classes in the unitary, symplectic and
  orthogonal groups}, J. Austral. Math. Soc. \textbf{3} (1963), 1--62. \MR{0150210.} \ZBL{0122.28102}.

\bibitem{MR197587}
H.~N. Ward, \emph{On {R}ee's series of simple groups}, Trans. Amer. Math. Soc.
  \textbf{121} (1966), 62--89, \doi{10.2307/1994333}. \MR{197587.} \ZBL{0139.24902}.

\bibitem{MR0179262}
J.~Wiegold, \emph{Multiplicators and groups with finite central factor-groups},
  Math. Z. \textbf{89} (1965), 345--347, 
  \doi{10.1007/BF01112166}. \MR{0179262.} \ZBL{0134.03002}.

\bibitem{MR2562037}
R.~A. Wilson, \emph{The finite simple groups}, Graduate Texts in Mathematics
  251, Springer-Verlag London Ltd., London, 2009, {\ISBN {978-1-84800-987-5}},
  \doi{10.1007/978-1-84800-988-2}. \MR{2562037 (2011e:20018).}
  \ZBL{1203.20012}.

\bibitem{MR297859}
D.~L. Winter, \emph{The automorphism group of an extraspecial {$p$}-group},
  Rocky Mountain J. Math. \textbf{2} (1972),  159--168,
  \doi{10.1216/RMJ-1972-2-2-159}. \MR{297859.}  \ZBL{0242.20023}.

\end{thebibliography}

\providecommand{\noopsort}[1]{}\def\cprime{$'$}
  \def\polhk#1{\setbox0=\hbox{#1}{\ooalign{\hidewidth
  \lower1.5ex\hbox{`}\hidewidth\crcr\unhbox0}}}

\end{document}